\newtheorem{theorem}{Theorem}
\newtheorem{remark}[theorem]{Remark}
\newtheorem{definition}[theorem]{Definition}
\newtheorem{lemma}[theorem]{Lemma}
\newtheorem{corollary}[theorem]{Corollary}
\newtheorem{hypothesis}[theorem]{Hypothesis}
\newtheorem{proposition}[theorem]{Proposition}
\newtheorem{example}{Example}
\newtheorem{question}{Question}
\title[Feynman-Kac for ODE]{A Feynman-Kac Type Theorem for ODEs: Solutions of Second Order ODEs as Modes of Diffusions}
\author{Zachary Selk}
\address{Department of Mathematics, Purdue University, West Lafayette IN 47907, USA.}
\author{Harsha Honnappa}
\address{School of Industrial Engineering, Purdue
University, West Lafayette IN 47907, USA.}
\email{[zselk,honnappa]@purdue.edu}
\thanks{H. Honnappa is supported by the NSF grant DMS-1812197.}
 \subjclass[2010]{60H30, 60G15, 34B05}
\begin{document}

\maketitle

\begin{abstract}
    In this article, we prove a Feynman-Kac type result for a broad class of second order ordinary differential equations. The classical Feynman-Kac theorem says that the solution to a broad class of second order parabolic equations is the mean of a particular diffusion. In our situation, we show that the solution to a system of second order ordinary differential equations is the mode of a diffusion, defined through the Onsager-Machlup formalism. One potential utility of our result is to use Monte Carlo type methods to estimate the solutions of ordinary differential equations. We conclude with examples of our result illustrating its utility in numerically solving linear second order ODEs.
\end{abstract}

\section{Introduction}
The Feynman-Kac theorem is a widely known and extensively used formula that relates the solutions of second order parabolic partial differential equations (PDEs) to the average of a transformation of a diffusion. It was introduced in \cite{Kac} and has since seen tremendous utility and generalization. The Feynman-Kac formula has been extended to stochastic PDE in e.g. \cite{FK-SPDE-1,FK-SPDE-3,FK-SPDE-2}. It has also seen utility in physics, see e.g. \cite{FK-Physics-2,FK-Physics-4,FK-Physics-3,FK-Physics-5,FK-Physics-1}. It has been used also in interacting particle systems, see e.g. \cite{FK-IPS-4,FK-IPS-2,FK-IPS-1,FK-IPS-3}. It has been used in finance, see e.g. \cite{FK-Finance-1}.

There are many forms of the Feynman-Kac formula, see e.g. \cite{Oksendal} for a standard reference. In one form, the Feynman-Kac formula says the following.
\begin{theorem}[Feynman-Kac]\label{thm:FK}
Consider the partial differential equation
\begin{equation*}
    \frac{\partial }{\partial t}u(t,x)+\mu(t,x)\frac{\partial }{\partial x}u(t,x)+\frac{1}{2}\sigma^2(t,x)\frac{\partial^2}{\partial x^2}u(t,x)=V(t,x)u(t,x),
\end{equation*}
on the domain $[0,T]\times \mathbb R$, with terminal condition $u(T,x)=\psi(x)$ and $\mu, \sigma, V$ sufficiently regular. Then
\begin{equation*}
    u(t,x)=E_{\mu_0}[e^{-\int_t^T V(s,X(s)) ds}\psi(X(T))| X(t)=x],
\end{equation*}
where $X(t)$ is the solution to the solution to the stochastic differential equation 
\begin{equation*}
    dX(t)=\mu(t,X(t))dt+\sigma(t,X(t))dB(t),
\end{equation*}
with initial condition $X(t)=x$, $B(t)$ a standard Brownian motion and $\mu_0$ is the measure associated to $B(t)$. 
\end{theorem}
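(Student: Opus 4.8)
The plan is to run the classical martingale argument. Fix $(t,x)\in[0,T]\times\mathbb R$ and let $X$ be the solution of the stated SDE with $X(t)=x$. First I would introduce the Feynman-Kac functional
\begin{equation*}
    Y(s) := \exp\Bigl(-\int_t^s V(r,X(r))\,dr\Bigr)\,u(s,X(s)), \qquad s\in[t,T],
\end{equation*}
and compute $dY(s)$ by Itô's formula, using that "sufficiently regular" includes $u\in C^{1,2}$: apply the chain rule to the absolutely continuous factor $\exp(-\int_t^s V\,dr)$, Itô's formula to $u(s,X(s))$ along the diffusion, and the product rule to combine them. Note that $Y(t)=u(t,x)$ and $Y(T)=\exp(-\int_t^T V(r,X(r))\,dr)\,\psi(X(T))$ by the terminal condition.

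The key step is the drift cancellation. Collecting the $ds$-terms in $dY(s)$ gives exactly
\begin{equation*}
    \exp\Bigl(-\int_t^s V(r,X(r))\,dr\Bigr)\Bigl(\partial_s u + \mu\,\partial_x u + \tfrac12\sigma^2\partial_{xx}u - Vu\Bigr)(s,X(s))\,ds,
\end{equation*}
which vanishes identically because $u$ solves the PDE. Hence $dY(s)=\exp(-\int_t^s V\,dr)\,\sigma(s,X(s))\,\partial_x u(s,X(s))\,dB(s)$, so $Y$ is a local martingale on $[t,T]$.

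I would then upgrade this to a genuine martingale using the regularity hypotheses on $\mu,\sigma,V,\psi$, which yield growth bounds on $u$ and $\partial_x u$ together with moment bounds on $\sup_{s\in[t,T]}|X(s)|$, and therefore $E\int_t^T \exp(-2\int_t^s V\,dr)\,\sigma^2(s,X(s))\,(\partial_x u)^2(s,X(s))\,ds<\infty$. Concretely I would localize with the stopping times $\tau_n=\inf\{s\ge t:|X(s)|\ge n\}\wedge T$ so that $E[Y(\tau_n)]=Y(t)=u(t,x)$, and then let $n\to\infty$, justifying the passage to the limit by dominated convergence via the above moment estimates. Evaluating at $s=T$ gives $E[Y(T)]=E\bigl[\exp(-\int_t^T V(r,X(r))\,dr)\,\psi(X(T))\bigr]=u(t,x)$, which is the asserted formula (the conditioning on $X(t)=x$ being automatic since $x$ is deterministic).

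The main obstacle is precisely this last integrability / uniform-integrability step: the statement hides the needed growth conditions inside the phrase "sufficiently regular," so making the local-to-true-martingale passage rigorous requires pinning down exactly which a priori bounds on $u$ and $\partial_x u$ are available — whether from the assumed regularity and growth of the coefficients, from standard parabolic estimates, or from an independent representation of $u$. The remaining ingredients — Itô's formula, the algebraic cancellation using the PDE, and the final evaluation at the terminal time — are routine.
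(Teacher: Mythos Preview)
The paper does not actually prove this theorem: Theorem~\ref{thm:FK} is stated in the introduction purely as classical background, with a citation to {\O}ksendal for a proof, and the authors move on immediately to their own ODE analogue. So there is no ``paper's own proof'' to compare against.

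That said, your proposal is the standard textbook argument (exactly what one finds in {\O}ksendal or Karatzas--Shreve): define $Y(s)=\exp(-\int_t^s V\,dr)\,u(s,X(s))$, apply It\^o's formula, observe that the drift term is the PDE and hence vanishes, leaving a local martingale, and then pass to a true martingale via localization and moment bounds before taking expectations at $s=T$. Your identification of the integrability step as the only non-routine point is accurate; the phrase ``sufficiently regular'' in the statement is precisely the placeholder for whatever growth conditions make that step go through. Nothing is missing from your outline.
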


Linear second order ordinary differential equations and systems thereof find extensive use in engineering, physics, biology, geometry, control theory and many other areas of pure and applied mathematics. One example of a system of such equations in physics is the matrix Airy equation, which has connections to the intersection theory on the moduli space of curves, see \cite{Kontsevich-Matrix-Airy}. Further examples abound in the computation of long-run average costs associated with diffusion processes, which can be expressed as solutions of second-order ordinary differential equations (ODEs). In this article, we demonstrate a theorem analogous to Theorem~\ref{thm:FK} for second order ordinary differential equations. However in the ODE case, solutions are represented as the conditional ``mode" of a diffusion instead of a conditional mean. By ``mode", we mean the minimizer of an Onsager-Machlup function. It was first shown in \cite{Durr} that the Onsager-Machlup function is a Lagrangian for the ``most likely path" or ``mode" of a diffusion. Our main theorem can be summarized as follows, see Theorem \ref{thm:main-2} for a full version.

\begin{theorem}[Feynman-Kac for ODEs]\label{thm:-main}
Consider the differential equation
\begin{equation}\label{eq:main-eq}
    \ddot y(t)+f(t)\dot y(t)+g(t)y(t)+h(t)=0,
\end{equation}
where the unknown $y\in C^2([0,T],\mathbb R)$,
with boundary conditions $y(0)=0$ and $y(T)=a$. 
Let $f(t), g(t)$ and $h(t)$ be (almost) arbitrary $C^1$ real valued functions. Then there exists a choice of $C^1([0,T],\mathbb R)$ functions $A(t)$ and $D(t)$ that depend on $f, g$ and $h$ so that the solution to \eqref{eq:main-eq} is 
\begin{equation}\label{eq:solution}
    y(t)=e^{-V(t)}\operatorname{Mode}\left[ X(t)\bigg|  X(T)=a\right],
\end{equation}  
where $X(t)=\int_0^t [A(s)B(s)+D(s)]ds+B(t)$, $B(t)$ is a standard Brownian motion and additionally $V(t)=2^{-1}\int_0^t {f(s)}ds$. Here, the conditional mode of a diffusion is defined as the minimizer of the Onsager-Machlup function defined in Theorem \ref{theorem:Portmanteau} over all paths $z$ with $z(T)=a$. 
\end{theorem}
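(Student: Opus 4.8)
The plan is to reverse-engineer the problem: instead of starting with a diffusion and computing its Onsager-Machlup function, I would start with the target ODE and construct the diffusion that produces it as the Euler-Lagrange equation of its Onsager-Machlup functional. First I would recall the Onsager-Machlup formalism from Theorem \ref{theorem:Portmanteau}: for a diffusion of the form $dX(t) = b(t,X(t))\,dt + dB(t)$, the Onsager-Machlup function (the Lagrangian whose minimizer is the mode) is, up to constants, $L(z,\dot z) = \tfrac12\bigl(\dot z - b(t,z)\bigr)^2 + \tfrac12 \partial_x b(t,z)$, so that the most likely path minimizes $\int_0^T L(z(t),\dot z(t))\,dt$ subject to the endpoint constraints. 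For the drift $b(t,x) = A(t)x + D(t)$ appearing in the statement, this Lagrangian becomes $L = \tfrac12(\dot z - A(t)z - D(t))^2 + \tfrac12 A(t)$, which is quadratic in $(z,\dot z)$ with time-dependent coefficients.

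Next I would write down the Euler-Lagrange equation $\tfrac{d}{dt}\partial_{\dot z}L = \partial_z L$ for this Lagrangian. A direct computation gives a linear second-order ODE of the form $\ddot z + \alpha(t)\dot z + \beta(t) z + \gamma(t) = 0$, where $\alpha, \beta, \gamma$ are explicit expressions in $A, D$ and their derivatives (e.g. $\alpha = -A'/A$-type terms — I will compute these carefully, but expect $\beta$ to involve $-A^2 - \dot A$ and $\gamma$ to involve $D, \dot D, A$). The substitution $y(t) = e^{-V(t)} z(t)$ with $V(t) = \tfrac12\int_0^t f(s)\,ds$ is the classical device for removing or adjusting the first-order term: I would plug this into \eqref{eq:main-eq}, convert it into an equation for $z$, and match coefficients with the Euler-Lagrange equation. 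This matching yields a system of (first-order, nonlinear) ODEs for $A$ and $D$ in terms of $f, g, h$. The key identity to extract will be a Riccati-type equation for $A$: something like $\dot A + A^2 = $ (an expression built from $f, g$), which is exactly why the hypotheses are ``(almost) arbitrary'' — one needs the Riccati equation to have a global $C^1$ solution on $[0,T]$, which may fail for some $f,g$ (blow-up), hence the qualifier. Once $A$ is determined, $D$ solves a \emph{linear} first-order ODE driven by $h$ (and $A$), so it exists whenever $A$ does.

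The main obstacle I anticipate is two-fold. First, the bookkeeping in converting the Euler-Lagrange equation through the exponential substitution and correctly identifying the coefficient-matching system — this is routine but error-prone, and the constant $\tfrac12 A(t)$ term from the $\partial_x b$ correction in the Onsager-Machlup function must be handled carefully since it shifts $\beta$. Second, and more substantively, is the global existence of the solution $A \in C^1([0,T])$ to the resulting Riccati equation: I would either impose an explicit hypothesis guaranteeing non-blowup (e.g. a smallness or sign condition on $f, g, T$) or phrase the theorem as: \emph{for those $f,g$ for which the associated Riccati equation admits a $C^1$ solution on $[0,T]$}, the representation \eqref{eq:solution} holds. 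Finally I would verify that the constructed minimizer is genuinely a minimizer (not merely a critical point) of the Onsager-Machlup functional — since $L$ is convex in $\dot z$ and the functional is a perturbed quadratic, the Euler-Lagrange solution with the given boundary data $z(0) = 0$, $z(T) = a$ is the unique global minimizer, provided no conjugate points occur on $[0,T]$; this is again where the ``almost arbitrary'' caveat enters, and I would note it explicitly.
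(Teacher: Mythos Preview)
Your plan is essentially the paper's: compute the Euler--Lagrange equation for the Onsager--Machlup functional with linear drift $b(t,x)=A(t)x+D(t)$, match it to the target ODE via a Riccati equation $\dot A+A^2=-\hat g$ for $A$ and a linear first-order equation for $D$, and handle the $f\dot y$ term by the substitution $y=e^{-V}z$. Two small corrections to your expectations: the $\tfrac12\partial_x b=\tfrac12 A(t)$ term depends only on $t$ and drops out of the Euler--Lagrange equation entirely (it does not shift $\beta$), and the resulting Euler--Lagrange equation has \emph{no} first-order term ($\alpha\equiv 0$ always for affine $\Gamma$; the paper proves this is unavoidable), so the exponential substitution with $V=\tfrac12\int_0^t f$ is needed precisely to \emph{eliminate} the $\dot y$ term from \eqref{eq:main-eq} before matching, not to adjust an already-present one.
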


The $e^{-V(t)}$ term is analogous to the Feynman-Kac for PDEs case, Theorem \ref{thm:FK}. The Feynman-Kac formula can first be proven for diffusions with no factor in front of the $u(t,x)$ term. Then including the exponential weight term, one can get more general terms in front of $u(t,x)$.

The primary tool we will use is the Onsager-Machlup formalism established in \cite{Durr,Stuart-OM,self}. In particular, we will extend the Portmanteau theorem in \cite{self}, which relates information projections, Kullback-Leibler weighted control, finding the most likely path of a diffusion and Euler-Lagrange equations. In \cite{Durr}, it was shown that the so-called ``Onsager-Machlup" function is a Lagrangian for the most likely path of a diffusion. That is, the minimizer is the ``mode" of the path. We show that the solution to (almost) any second order linear ODE is the (potentially weighted) conditional mode a diffusion, defined as the minimizer of the Onsager-Machlup function. Additionally, we extend our results to include systems of second order linear ODEs.

In Section \ref{sec:preliminaries}, we introduce this formalism and the required stochastic analysis. In Section \ref{sec:portproof} we extend the Portmanteau theorem of \cite{self} for our purposes. In Section \ref{sec:proof} we prove Theorem \ref{thm:-main}. Finally in Section \ref{sec:examples} we give some examples. 

\section{Preliminaries}\label{sec:preliminaries}
In this section, we collect some preliminary results on stochastic analysis on Wiener space and finding modes of diffusions through the Onsager-Machlup formalism. For more information, see \cite{Baudoin,Stuart-OM,hairer2009introduction,self}. We first define classical Wiener space and the Cameron-Martin space of Brownian motion which play a central role in our paper. 
\begin{definition}
We say that $(\mathcal C_0([0,T], \mathbb R^n), \mathcal B(\|\cdot\|_\infty), \mu_0)$ is \textbf{classical Wiener space}, where $\mathcal C_0[0,T]$ is the space of continuous $\mathbb R^n$ valued functions $f$ on the compact interval $[0,T]$ with $f(0)=0$, $\mathcal B(\|\cdot\|_\infty)$ are the Borel sets from the supremum norm $\|\cdot\|_\infty$ and $\mu_0$ is classical Wiener measure corresponding to a standard $n$-dimensional Brownian motion $B(t)$. 
\end{definition}
\begin{definition}\label{def:CM-space}
The \textbf{Cameron-Martin space} $\mathcal H_{\mu_0}$ associated to classical Wiener space is the Sobolev space $W_0^{1,2}[0,T]$ of all absolutely continuous functions whose derivative lie in $L^2[0,T]$. We also denote the Cameron-Martin norm of an element $z\in \mathcal H_{\mu_0}$ by
\begin{equation*}
    \|z\|_{\mu_0}^2:=\int_0^T \|\dot z(t)\|^2dt.
\end{equation*}
\end{definition}
The primary reason why we need the Cameron-Martin space is the following theorem, called the Cameron-Martin theorem.
\begin{theorem}\label{theorem:CM} \cite[Theorem 3.41]{hairer2009introduction}
Let $\mu_0$ be classical Wiener measure on $\mathcal C_0$.  For $h\in \mathcal C_0$, define the pushforward measure $\mu:=T_h^\ast(\mu_0)$ by
$$T_h^\ast(\mu_0)(A)=\mu_0(A-h),$$
for all Borel $A\in \mathcal B(\|\cdot\|_\infty)$. Then the pushforward measure $\mu:=T_h^\ast(\mu_0)$ is absolutely continuous with respect to $\mu_0$ if and only $h\in \mathcal H_{\mu_0}$. Furthermore, the Radon-Nikodym derivative of $\mu$ with respect to $\mu_0$ is
\begin{equation*}
    \frac{d\mu}{d\mu_0}=\exp \left(\int_0^T \dot h(t) \cdot dB(t)-\frac{1}{2}\int_0^T \|\dot h(t)\|^2 dt\right).
\end{equation*}
\end{theorem}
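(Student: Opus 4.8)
The plan is to reduce the infinite-dimensional statement to a family of finite-dimensional Gaussian computations indexed by partitions of $[0,T]$, and then to pass to the limit. Fix a partition $0=t_0<t_1<\cdots<t_m=T$ with increments $\Delta_k=t_k-t_{k-1}$, and let $\mathcal F_m$ be the $\sigma$-algebra generated by the coordinate evaluations $B(t_1),\dots,B(t_m)$. Under $\mu_0$ the increments $\xi_k:=B(t_k)-B(t_{k-1})$ are independent $N(0,\Delta_k I)$ vectors, while under the shifted measure $\mu=T_h^\ast(\mu_0)$ these same increments are distributed as $N(\delta_k,\Delta_k I)$ with $\delta_k:=h(t_k)-h(t_{k-1})$, since translation by $h$ leaves the covariance structure untouched and merely relocates the means. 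The Radon--Nikodym derivative of $\mu$ restricted to $\mathcal F_m$ with respect to $\mu_0$ restricted to $\mathcal F_m$ is therefore the product of the individual Gaussian density ratios, namely
\begin{equation*}
    Z_m=\exp\left(\sum_{k=1}^m \frac{\delta_k}{\Delta_k}\cdot \xi_k-\frac{1}{2}\sum_{k=1}^m\frac{\|\delta_k\|^2}{\Delta_k}\right).
\end{equation*}

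For the sufficiency direction, I would assume $h\in\mathcal H_{\mu_0}$, so that $h$ is absolutely continuous with $\dot h\in L^2$. As the mesh of the partition tends to zero, the first sum above converges in $L^2(\mu_0)$ to the Wiener integral $\int_0^T \dot h\,dB$ (a deterministic-integrand Paley--Wiener integral, so no full It\^o theory is needed), while the second sum is an ordinary $L^2$ Riemann approximation converging to $\int_0^T \|\dot h(t)\|^2\,dt=\|h\|_{\mu_0}^2$. Taking the partitions to be successive refinements, the sequence $(Z_m)$ is a nonnegative $\mu_0$-martingale with $E_{\mu_0}[Z_m]=1$; the $L^2$ convergence of the exponent shows the limit is integrable and the martingale is uniformly integrable, so $Z_m$ converges in $L^1(\mu_0)$ to $Z_\infty:=\exp(\int_0^T\dot h\,dB-\tfrac12\int_0^T\|\dot h\|^2\,dt)$. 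Since the $\mathcal F_m$ generate $\mathcal B(\|\cdot\|_\infty)$, passing to the limit in $\mu(A)=\int_A Z_m\,d\mu_0$ over cylinder sets identifies $Z_\infty$ as the claimed density and yields $\mu\ll\mu_0$.

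For the necessity direction I would invoke the Gaussian dichotomy through the Hellinger affinity. A direct computation gives that the affinity of $N(\delta_k,\Delta_k I)$ against $N(0,\Delta_k I)$ equals $\exp(-\tfrac18\|\delta_k\|^2/\Delta_k)$, so the affinity of $\mu$ and $\mu_0$ restricted to $\mathcal F_m$ is $\rho_m=\exp(-\tfrac18\sum_k\|\delta_k\|^2/\Delta_k)$. This quantity is nonincreasing under refinement, and by Cauchy--Schwarz one has $\sup_{\text{partitions}}\sum_k\|\delta_k\|^2/\Delta_k=\|h\|_{\mu_0}^2$ when $h\in\mathcal H_{\mu_0}$, with value $+\infty$ exactly when $h$ fails to be absolutely continuous or $\dot h\notin L^2$. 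Hence the limiting affinity is positive precisely when $h\in\mathcal H_{\mu_0}$ and is $0$ otherwise; a vanishing affinity forces $\mu\perp\mu_0$ by Kakutani's dichotomy for the product structure of the increments, so absolute continuity can hold only when $h\in\mathcal H_{\mu_0}$.

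The main obstacle, and the technical heart of the argument, is the limiting step in the sufficiency half: identifying the discrete sums $\sum_k(\delta_k/\Delta_k)\cdot\xi_k$ with the Wiener integral $\int_0^T\dot h\,dB$, and upgrading the finite-dimensional densities $Z_m$ to a genuine Radon--Nikodym derivative on the whole path space. This demands the $L^2$ convergence of the Riemann sums together with a uniform-integrability or martingale-convergence argument, with care that the limiting density integrates to one so that no mass is lost in the limit. The necessity side is comparatively routine, resting on the monotonicity of $\rho_m$ under refinement and the standard characterization of $\|h\|_{\mu_0}^2$ as the supremum of the energy sums over partitions.
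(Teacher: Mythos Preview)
The paper does not prove this theorem at all: it is quoted verbatim as \cite[Theorem 3.41]{hairer2009introduction} and used as a black box, so there is no ``paper's own proof'' to compare against. Your sketch via finite-dimensional cylinder approximations, martingale convergence for the sufficiency direction, and the Hellinger--Kakutani dichotomy for necessity is a standard and essentially correct route to the Cameron--Martin theorem; the identification of $\sup_{\text{partitions}}\sum_k\|\delta_k\|^2/\Delta_k$ with $\|h\|_{\mu_0}^2$ (finite iff $h\in W_0^{1,2}$) is exactly the right characterization to drive the dichotomy, and the uniform-integrability step you flag as the ``main obstacle'' is handled cleanly by the deterministic-integrand Gaussian case since $\int_0^T\dot h\,dB\sim N(0,\|h\|_{\mu_0}^2)$ makes $E_{\mu_0}[Z_\infty]=1$ immediate.
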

For more information on Cameron-Martin spaces see \cite{hairer2009introduction}, Chapter 3. We formally define the set of Gaussian shift measures from Theorem \ref{theorem:CM} to be
\begin{equation}\label{eq:P-set-def}
    \mathcal P:=\{T_h^\ast(\mu_0):h\in \mathcal H_{\mu_0}\}.
\end{equation}
The Cameron-Martin theorem states that one can shift by ``deterministic" paths that lie in the Cameron-Martin space in a way that the pushforward measure is absolutely continuous with respect to $\mu_0$. The generalization to shifting classical Wiener measure by more general processes is Girsanov's theorem. We provide both a forward and a reverse direction here.

\begin{theorem}\label{theorem:Girsanov1}\cite[Lemma 5.76]{Baudoin}
Let $(\mathcal C_0[0,T], \mu_0)$ be classical Wiener space, and assume:

(a) $F$ is a progressively measurable process with respect to the filtration generated by the standard Brownian motion $B(t)$;

(b) The sample paths are almost surely in the Cameron-Martin space $W_0^{1,2}$;

(c) Novikov's condition,
\begin{equation*}
    E_{\mu_0}\left[\exp \left(\int_0^T \|f(s)\|^2 ds\right)\right]<\infty,
\end{equation*}
where $f(s) = \dot F(s)$, holds.

Then, the process $B(t)-F(t)$ is a standard Brownian motion under $\mu$ with density
$$\frac{d\mu}{d\mu_0}=\exp\left(\int_0^T f(s)\cdot dB(s)-\frac{1}{2}\int_0^T \|f(s)\|^2 ds\right).$$
\end{theorem}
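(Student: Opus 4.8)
The plan is to follow the classical route to Girsanov's theorem via the exponential martingale together with L\'evy's characterization of Brownian motion. First I would introduce the candidate density process
\begin{equation*}
    Z(t) := \exp\left(\int_0^t f(s)\cdot dB(s) - \frac{1}{2}\int_0^t \|f(s)\|^2\, ds\right),
\end{equation*}
which by hypotheses (a) and (b) is a well-defined continuous process (the It\^o integral $\int_0^t f(s)\cdot dB(s)$ is meaningful because $F$ is progressively measurable with square-integrable derivative $f=\dot F$), and I would observe via It\^o's formula that $dZ(t) = Z(t)\, f(t)\cdot dB(t)$. Hence $Z$ is a nonnegative continuous local martingale, and therefore a supermartingale with $E_{\mu_0}[Z(t)] \le 1$ for all $t$.

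The crux is to upgrade $Z$ from a local martingale to a true martingale, so that $E_{\mu_0}[Z(T)] = 1$ and the set function $\mu$ defined by $d\mu/d\mu_0 := Z(T)$ is a genuine probability measure (in fact $\mu$ and $\mu_0$ are then mutually absolutely continuous, since $Z>0$). This is precisely where Novikov's condition (c) enters: along a localizing sequence $\tau_n$ one shows that $\{Z(t\wedge\tau_n)\}_n$ is uniformly integrable via a H\"older-inequality argument, writing $Z$ as a product of a power of itself and an exponential-of-the-integrand correction term and controlling the resulting exponential moments by $E_{\mu_0}[\exp(\int_0^T\|f(s)\|^2\,ds)] < \infty$, and then passing to the limit with Fatou's lemma to conclude $E_{\mu_0}[Z(T)] = 1$. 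I expect this sufficiency of Novikov's criterion to be the main technical obstacle, since the exponential moment of $\int_0^T f\cdot dB$ is not controlled directly and one genuinely needs the interpolation-and-stopping argument.

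Granting that $Z$ is a true martingale, I would then verify that $\widetilde B(t) := B(t) - F(t) = B(t) - \int_0^t f(s)\,ds$ is an $n$-dimensional standard Brownian motion under $\mu$ by checking the hypotheses of L\'evy's characterization. For the martingale property I would invoke the Bayes-type criterion that a process $M$ is a $\mu$-martingale if and only if $MZ$ is a $\mu_0$-martingale, and then compute $d(\widetilde B^i(t) Z(t))$ by the It\^o product rule: the finite-variation part coming from $d\langle \widetilde B^i, Z\rangle_t = Z(t) f^i(t)\, dt$ is exactly cancelled by the $-Z(t)\, dF^i(t) = -Z(t) f^i(t)\, dt$ contribution, so $\widetilde B^i Z$ is a local $\mu_0$-martingale and hence $\widetilde B^i$ is a local $\mu$-martingale. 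Since the quadratic covariation $\langle \widetilde B^i, \widetilde B^j\rangle_t = \langle B^i, B^j\rangle_t = \delta_{ij}\, t$ is unchanged by the absolutely continuous change of measure, L\'evy's theorem gives that $\widetilde B$ is a standard $n$-dimensional Brownian motion under $\mu$, which is the assertion; the stated expression for $d\mu/d\mu_0$ is then immediate from the definition of $Z(T)$.
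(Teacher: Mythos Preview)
The paper does not supply its own proof of this statement: it is quoted as a preliminary result with a citation to \cite[Lemma 5.76]{Baudoin} and is used later without further justification. Your outline is the standard textbook proof of Girsanov's theorem---exponential local martingale, Novikov's criterion to upgrade to a true martingale so that $E_{\mu_0}[Z(T)]=1$, then L\'evy's characterization via the Bayes rule $M$ is a $\mu$-martingale iff $MZ$ is a $\mu_0$-martingale---and it is correct as a plan; there is nothing in the paper to compare it against.
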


\begin{theorem}\label{theorem:Girsanov2} \cite[Theorem 5.72]{Baudoin}
Let $(\mathcal C_0[0,T], \mu_0)$ be classical Wiener space, and suppose $\mu\sim\mu_0$. Then, there exists a progressively measurable process $F(t)$ with sample paths almost surely in the Cameron-Martin space $W_0^{1,2}$ so that the process $B(t)-F(t)$ is a standard Brownian motion under $\mu$. Furthermore, the density is given by
$$\frac{d\mu}{d\mu_0}=\exp\left(\int_0^T f(s)\cdot dB(s)-\frac{1}{2}\int_0^T \|f(s)\|^2 ds\right),$$
where $f(s) = \dot F(s)$.
\end{theorem}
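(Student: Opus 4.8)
The plan is to reverse the logic of Theorem \ref{theorem:Girsanov1}: rather than starting from a process $F$ and producing an equivalent measure, I start from the equivalence $\mu\sim\mu_0$ and \emph{reconstruct} $F$ from the density process. Since $\mu\sim\mu_0$, the Radon-Nikodym derivative $\Lambda:=d\mu/d\mu_0$ exists, is $\mathcal B(\|\cdot\|_\infty)$-measurable, and is strictly positive $\mu_0$-almost surely. Let $(\mathcal F_t)_{t\in[0,T]}$ be the augmented filtration generated by the coordinate Brownian motion $B$, and define the density martingale
\begin{equation*}
  M_t:=E_{\mu_0}[\Lambda\mid\mathcal F_t],\qquad t\in[0,T].
\end{equation*}
Then $M$ is a nonnegative $(\mathcal F_t,\mu_0)$-martingale with $M_0=E_{\mu_0}[\Lambda]=1$ and $M_T=\Lambda$, and because the Brownian filtration supports only continuous martingales, $M$ admits a continuous version, which I fix.

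The first substantive step is to upgrade nonnegativity to strict positivity, $M_t>0$ for all $t\in[0,T]$ almost surely; this is where full equivalence (not mere $\mu\ll\mu_0$) enters. A nonnegative continuous martingale is absorbed at $0$ once it reaches $0$, so the event $\{\inf_{t\le T}M_t=0\}$ forces $M_T=0$, which contradicts $M_T=\Lambda>0$ a.s.; hence that event is $\mu_0$-negligible. With positivity in hand I apply the martingale representation theorem on Wiener space \cite{Baudoin} to write
\begin{equation*}
  M_t=1+\int_0^t\phi_s\cdot dB(s),
\end{equation*}
for a progressively measurable $\phi$ with $\int_0^T\|\phi_s\|^2\,ds<\infty$ a.s. Setting $f_s:=\phi_s/M_s$ (well defined by positivity) and applying It\^o's formula to $\log M_t$ yields
\begin{equation*}
  \log M_t=\int_0^t f_s\cdot dB(s)-\tfrac12\int_0^t\|f_s\|^2\,ds,
\end{equation*}
so that $M$ is the stochastic exponential of $\int f\cdot dB$, which is exactly the asserted density with $F(t):=\int_0^t f_s\,ds$.

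It remains to verify the two structural claims. That $F$ has sample paths in the Cameron-Martin space $W_0^{1,2}$ reduces to $\int_0^T\|f_s\|^2\,ds<\infty$ a.s.; on each fixed path $M$ is continuous and strictly positive on the compact interval $[0,T]$, hence bounded below by a positive constant, so $\|f_s\|^2=\|\phi_s\|^2/M_s^2$ is integrable because $\int_0^T\|\phi_s\|^2\,ds<\infty$. Finally, that $B-F$ is a $\mu$-Brownian motion follows from Girsanov's theorem: since $M$ is by construction a genuine (not merely local) positive martingale---a fact that here substitutes for Novikov's condition assumed in Theorem \ref{theorem:Girsanov1}---the change of measure $d\mu=M_T\,d\mu_0$ turns the continuous local martingale $\tilde B_t:=B(t)-\int_0^t f_s\,ds$ into one with quadratic variation $\langle\tilde B\rangle_t=t$, and L\'evy's characterization identifies $\tilde B=B-F$ as a standard Brownian motion under $\mu$. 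I expect the main obstacle to be precisely the strict-positivity step together with the a.s. finiteness of $\int_0^T\|f_s\|^2\,ds$: these are exactly the points where the full equivalence $\mu\sim\mu_0$ is needed, and where one must pass from the $L^2_{\mathrm{loc}}$ integrability furnished by martingale representation to the pathwise Cameron-Martin regularity demanded by the statement.
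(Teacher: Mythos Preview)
The paper does not supply its own proof of this theorem; it is quoted as a preliminary result from \cite[Theorem 5.72]{Baudoin}, so there is no in-paper argument to compare against. Your proof is the standard one: form the density martingale $M_t=E_{\mu_0}[d\mu/d\mu_0\mid\mathcal F_t]$, use equivalence to secure strict positivity, invoke martingale representation on the Brownian filtration to write $M$ as a stochastic exponential, and finish with L\'evy's characterization under the changed measure. Each step is correct, including the two points you flagged as delicate---the absorption-at-zero argument for positivity and the pathwise bound $\inf_{t\le T}M_t>0$ that converts $\int_0^T\|\phi_s\|^2\,ds<\infty$ into $\int_0^T\|f_s\|^2\,ds<\infty$---so the proposal stands on its own as a proof of the cited statement.
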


We can now introduce Onsager-Machlup formalism for finding the mode of a diffusion. The Onsager-Machlup function is defined as below.

\begin{definition}
Let $\mu_0$ be a classical Wiener measure on $\mathcal C_0$. Let $B^\delta(z)\subset \mathcal B$ be the open ball of radius $\delta$ around $z$. Let $\mu$ be another measure that is absolutely continuous with respect to $\mu_0$. If the limit
\begin{equation*}
    \lim_{\delta\to 0}\frac{\mu(B^\delta(z_2))}{\mu(B^\delta(z_1))}=\exp\left(OM(z_1)-OM(z_2)\right)
\end{equation*}
exists for all $z_1,z_2\in \mathcal H_{\mu_0}$, then $OM$ is called the \textbf{Onsager-Machlup} function for $\mu$. 
\end{definition}

In order to ensure that the Onsager-Machlup function exists, we need the following hypothesis and theorem taken from \cite{Stuart-OM}.

\begin{hypothesis}\label{hypothesis:OM}
Let $\Phi:\mathcal C_0\to \mathbb R$ be a functional satisfying the following conditions:

(i) For every $\varepsilon>0$ there is an $M\in \mathbb R$ such that
$$\Phi(\omega)\geq M-\varepsilon \|\omega\|^2,$$
for all $\omega\in \mathcal C_0$.

(ii) $\Phi$ is locally bounded above, i.e., for every $r>0$ there is a $K=K(r)>0$ such that
$$\Phi(\omega)\leq K,$$
for all $\omega\in\mathcal C_0$ with $\|\omega\|<r$.

(iii) $\Phi$ is locally Lipschitz continuous, i.e., for every $r>0$ there exists $L=L(r)>0$ such that
$$|\Phi(\omega_1)-\Phi(\omega_2)|\leq L\|\omega_1-\omega_2\|,$$
for all $\omega_1,\omega_2\in \mathcal C_0$ with $\|\omega_1\|<r$ and $\|\omega_2\|<r$.
\end{hypothesis}

\begin{theorem}\label{theorem:Stuart-OM}
\cite[Theorem 3.2]{Stuart-OM} Let $\Phi:\mathcal C_0\to \mathbb R$ satisfy Hypothesis \ref{hypothesis:OM}. Let the measure $\mu$ have the density  $$\frac{d\mu}{d\mu_0}=\frac{e^{-\Phi}}{E_{\mu_0}[e^{-\Phi}]}.$$ Then, the OM function is given by
\begin{equation*}
    OM_\Phi(z)=\begin{cases}
    \Phi(z)+\frac{1}{2}\|z\|_{\mu_0}^2 &\text{ if } z\in \mathcal H_{\mu_0}\\
    \infty &\text{ else}
    \end{cases}.
\end{equation*}
\end{theorem}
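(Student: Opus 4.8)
The plan is to evaluate the defining ratio $\mu(B^\delta(z_2))/\mu(B^\delta(z_1))$ as $\delta\to 0$ and read off $OM_\Phi$; since only differences of $OM_\Phi$ are pinned down by the definition, an additive constant is irrelevant. Write $Z:=E_{\mu_0}[e^{-\Phi}]$. By Hypothesis~\ref{hypothesis:OM}(i) one has $e^{-\Phi(\omega)}\le e^{-M}e^{\varepsilon\|\omega\|^2}$, which is $\mu_0$-integrable for $\varepsilon$ small by Fernique's theorem, so $0<Z<\infty$ (positivity uses Hypothesis~\ref{hypothesis:OM}(ii) together with the fact that $\mu_0$ has full support, so $\mu_0(B^\delta(z))>0$). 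Hence $\mu(B^\delta(z))=Z^{-1}\int_{B^\delta(z)}e^{-\Phi}\,d\mu_0$ and the normalizer $Z$ cancels in every ratio. By the local Lipschitz bound of Hypothesis~\ref{hypothesis:OM}(iii), on $B^\delta(z)$ one has $|\Phi(\omega)-\Phi(z)|\le L(\|z\|+\delta)\,\delta$, so
\begin{equation*}
    \int_{B^\delta(z)}e^{-\Phi}\,d\mu_0=e^{-\Phi(z)}\,\mu_0(B^\delta(z))\,\big(1+o(1)\big)\qquad(\delta\to 0),
\end{equation*}
and therefore $\mu(B^\delta(z_2))/\mu(B^\delta(z_1))=e^{\Phi(z_1)-\Phi(z_2)}\,\mu_0(B^\delta(z_2))/\mu_0(B^\delta(z_1))\,(1+o(1))$. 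This reduces the statement to the Onsager--Machlup function of $\mu_0$ itself.

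For the Wiener part, take first $z\in\mathcal H_{\mu_0}$. Using $B^\delta(z)=z+B^\delta(0)$ and the Cameron--Martin density of Theorem~\ref{theorem:CM} with shift $-z$,
\begin{equation*}
    \mu_0(B^\delta(z))=e^{-\frac12\|z\|_{\mu_0}^2}\,E_{\mu_0}\!\big[e^{-\int_0^T\dot z(t)\,dB(t)}\,;\,\|B\|_\infty<\delta\big].
\end{equation*}
I would then show that this expectation, divided by $\mu_0(B^\delta(0))$, tends to $1$. For $z$ smooth this is immediate after integrating by parts, $\int_0^T\dot z\,dB=\dot z(T)B(T)-\int_0^T\ddot z(t)B(t)\,dt$, which is $O(\delta)$ uniformly on $\{\|B\|_\infty<\delta\}$; for general $z\in\mathcal H_{\mu_0}$ one approximates $z$ by smooth paths in the Cameron--Martin norm and controls the Itô integral by an $L^2$ estimate. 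Granting this, $\mu_0(B^\delta(z_2))/\mu_0(B^\delta(z_1))\to\exp\big(\tfrac12\|z_1\|_{\mu_0}^2-\tfrac12\|z_2\|_{\mu_0}^2\big)$, and combining with the previous paragraph yields $OM_\Phi(z)=\Phi(z)+\tfrac12\|z\|_{\mu_0}^2$ on $\mathcal H_{\mu_0}$. If instead $z\notin\mathcal H_{\mu_0}$, the ``only if'' direction of Theorem~\ref{theorem:CM} gives $T_{-z}^\ast\mu_0\perp\mu_0$; together with Anderson's inequality $\mu_0(B^\delta(z))\le\mu_0(B^\delta(0))$ and the strict exponential gap in the Gaussian small-ball asymptotics this forces $\mu_0(B^\delta(z))/\mu_0(B^\delta(0))\to 0$, hence (using that $\Phi$ is locally bounded near $z$ and near any fixed $z_1\in\mathcal H_{\mu_0}$) $\mu(B^\delta(z))/\mu(B^\delta(z_1))\to 0$, i.e.\ $OM_\Phi(z)=\infty$.

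The step I expect to be the main obstacle is the Wiener-measure estimate for general $z\in\mathcal H_{\mu_0}$: because $\int_0^T\dot z\,dB$ is an Itô integral rather than a Riemann--Stieltjes one, it is not pathwise bounded on the small ball $\{\|B\|_\infty<\delta\}$, so it cannot be ``frozen'' the way $\Phi$ was. This is precisely where the quadratic lower bound of Hypothesis~\ref{hypothesis:OM}(i) and the Fernique-type integrability are used, and where one needs a density argument in the Cameron--Martin topology (or, ideally, an off-the-shelf small-ball lemma for Gaussian measures). Once that ingredient is in hand the remainder is bookkeeping: cancel the normalizer, freeze $\Phi$ by Lipschitz continuity, and multiply the two asymptotics.
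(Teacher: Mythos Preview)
The paper does not prove this theorem; it is quoted verbatim from \cite[Theorem~3.2]{Stuart-OM} as a preliminary result, so there is no in-paper proof to compare against. Your sketch follows essentially the strategy of that reference: cancel the normalizer, freeze $\Phi$ on the small ball via the local Lipschitz hypothesis, and reduce to the small-ball ratio for $\mu_0$ using the Cameron--Martin density. You have also correctly isolated where the real work lies.

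That said, the step you flag as the obstacle does have a gap in your proposed fix. Approximating $z$ by smooth paths in the Cameron--Martin norm and ``controlling the It\^o integral by an $L^2$ estimate'' is not enough as stated: an $L^2$ bound on $\int_0^T\dot z\,dB$ does not control the \emph{conditional} expectation $E_{\mu_0}\big[e^{-\int_0^T\dot z\,dB}\,\big|\,\|B\|_\infty<\delta\big]$, because you are conditioning on an event of vanishing probability, which can blow up moments. The actual argument (in the cited paper, and going back to Shepp, Ikeda--Watanabe, and Zeitouni) establishes directly that for every $h\in L^2[0,T]$,
\[
\lim_{\delta\to 0}E_{\mu_0}\!\Big[\exp\Big(\int_0^T h(t)\,dB(t)\Big)\,\Big|\,\|B\|_\infty<\delta\Big]=1,
\]
using H\"older together with exponential-martingale bounds and a symmetry/correlation argument; this is a genuine lemma, not a consequence of $L^2$ density. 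Your handling of $z\notin\mathcal H_{\mu_0}$ is likewise heuristic: Anderson's inequality only gives $\mu_0(B^\delta(z))\le\mu_0(B^\delta(0))$, and the ``strict exponential gap'' you invoke to force the ratio to zero needs its own proof (e.g.\ via the Cameron--Martin dichotomy combined with small-ball asymptotics), which you have not supplied. Note also that the paper's Definition of $OM$ only quantifies over $z_1,z_2\in\mathcal H_{\mu_0}$, so strictly speaking the ``$\infty$ else'' clause is a convention rather than something derived from the ratio limit.
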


\begin{remark}\label{rmk:conditional}
The minimizer of the Onsager-Machlup function can be seen as the ``most likely" path of the diffusion, which was shown in \cite{Durr}. This fact is central to the approach in this paper. Furthermore, we define the conditional mode to minimize $OM$ over a region. That is, for a subset $A\subset \mathcal H_{\mu_0}$ and a process $X$ whose path measure has a density $$\frac{d\mu}{d\mu_0}=\frac{e^{-\Phi}}{E_{\mu_0}[e^{-\Phi}]},$$ 
we define
\begin{equation*}
    \operatorname{Mode}\left[X(t)\bigg| A\right]:=\arg\min_{z\in A}OM_\Phi(z),
\end{equation*}
if the infimum is indeed achieved.
\end{remark}
We also recall the definition of Kullback-Leibler divergence. For two probability measures $\mu_1,\mu_2$ on some measurable space $(X,\mathcal X)$, the KL divergence is defined by
\begin{equation*}
    D_{KL}(\mu_1||\mu_2)=
    \begin{cases}
    E_{\mu_1} \left[\log\left(\frac{d\mu_1}{d\mu_2}\right)\right] &\text{ if } \frac{d\mu_1}{d\mu_2} \text{ exists }\\
    \infty &\text{ else}.
    \end{cases}
\end{equation*}

We recall three key properties of the KL-divergence: (i) $D_{KL}(\mu||\mu_0) \geq 0$; (ii) $D_{KL}(\mu||\mu_0) = 0$ if and only if $\mu = \mu_0$; and (iii) $\mu \mapsto D_{KL}(\mu||\mu_0)$ is convex. The next result shows how to compute the KL-divergence between two shift measures drawn from the Cameron-Martin space $\mathcal H_{\mu_0}$.

\begin{theorem}\label{theorem:KL-for-shift}\cite[Lemma 3.20]{Dan-KLD-CM}
Let $\mu_0$ be a centered Gaussian measure on $\mathcal B$ with Cameron-Martin space $\mathcal H_{\mu_0}$, and let $\mu_1 = T_{h_1}^\ast(\mu_0)$ and $\mu_2 = T_{h_2}^\ast(\mu_0)$ for some $h_1,h_2\in \mathcal H_{\mu_0}$. Then, the Radon-Nikodym derivative $\frac{d\mu_1}{d\mu_2}$ exists and
\begin{equation*}
    D_{KL}(\mu_1||\mu_2) = \frac{1}{2}\|h_1-h_2\|_{\mu_0}^2.
\end{equation*}
\end{theorem}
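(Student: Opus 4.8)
The plan is to write the two Radon--Nikodym derivatives explicitly via the Cameron--Martin theorem, collapse their quotient to a single exponential, and then take an expectation; the only substantive ingredient is the first--chaos (Paley--Wiener) functional attached to a Cameron--Martin vector, together with its mean-zero and translation properties.

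First I would apply the Cameron--Martin theorem (Theorem~\ref{theorem:CM}, in its version for a general centered Gaussian measure): since $h_1,h_2\in\mathcal H_{\mu_0}$, each $\mu_i=T_{h_i}^\ast(\mu_0)$ is absolutely continuous with respect to $\mu_0$, with
\[
\frac{d\mu_i}{d\mu_0}=\exp\!\Big(I(h_i)-\tfrac12\|h_i\|_{\mu_0}^2\Big),\qquad i=1,2,
\]
where $I(h)$ denotes the element of the first Wiener chaos of $\mu_0$ associated with $h$ (for classical Wiener measure, $I(h)=\int_0^T\dot h(t)\cdot dB(t)$). Both densities are strictly positive $\mu_0$--almost surely, so $\mu_1$ and $\mu_2$ are mutually equivalent and, by the chain rule for Radon--Nikodym derivatives, $\tfrac{d\mu_1}{d\mu_2}$ exists and equals $\exp\!\big(I(h_1-h_2)-\tfrac12\|h_1\|_{\mu_0}^2+\tfrac12\|h_2\|_{\mu_0}^2\big)$, using linearity of $I$. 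To evaluate $D_{KL}(\mu_1\|\mu_2)=E_{\mu_1}[\log\tfrac{d\mu_1}{d\mu_2}]$ I would use that under $\mu_1$ one has $\omega\overset{d}{=}\xi+h_1$ with $\xi\sim\mu_0$, together with the translation identity $I(h_1-h_2)(\xi+h_1)=I(h_1-h_2)(\xi)+\langle h_1-h_2,h_1\rangle_{\mu_0}$ and the fact that $I(\cdot)$ is centered under $\mu_0$; this gives
\[
D_{KL}(\mu_1\|\mu_2)=\langle h_1-h_2,h_1\rangle_{\mu_0}-\tfrac12\|h_1\|_{\mu_0}^2+\tfrac12\|h_2\|_{\mu_0}^2=\tfrac12\|h_1-h_2\|_{\mu_0}^2,
\]
the last step being the elementary identity $\langle a-b,a\rangle-\tfrac12\|a\|^2+\tfrac12\|b\|^2=\tfrac12\|a-b\|^2$. (Alternatively, one can first reduce to $D_{KL}\big(\mu_0\,\|\,T_{h_2-h_1}^\ast(\mu_0)\big)$ by invariance of the KL divergence under the common measurable translation $\omega\mapsto\omega-h_1$, and then apply Theorem~\ref{theorem:CM} a single time.)

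The main obstacle I anticipate is purely one of making the first--chaos map $I\colon\mathcal H_{\mu_0}\to L^2(\mu_0)$ and its three needed properties precise in the abstract Gaussian setting: linearity, the isometry $E_{\mu_0}[I(h)^2]=\|h\|_{\mu_0}^2$ (hence $I(h)\in L^1(\mu_0)$ and is centered Gaussian), and the shift rule $I(h)(\omega+g)=I(h)(\omega)+\langle h,g\rangle_{\mu_0}$ for $g\in\mathcal H_{\mu_0}$. For classical Wiener measure these are standard facts about the Wiener integral, so in the case actually needed in this paper the computation above is essentially immediate; for an abstract centered Gaussian measure one constructs $I$ as the $L^2(\mu_0)$--limit of continuous linear functionals, which realizes the isometry between $\mathcal H_{\mu_0}$ and the first Wiener chaos of $\mu_0$, after which the same argument applies verbatim.
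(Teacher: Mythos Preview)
Your argument is correct. The Cameron--Martin densities, the chain rule for Radon--Nikodym derivatives, the translation identity for the Paley--Wiener functional, and the final polarization computation are all sound, and the alternative reduction via translation invariance of the KL divergence is a clean shortcut. Note, however, that the paper does not supply its own proof of this statement: Theorem~\ref{theorem:KL-for-shift} is simply quoted from \cite[Lemma~3.20]{Dan-KLD-CM} without argument, so there is nothing in the paper to compare your approach against. Your proof is exactly the standard one and would be a fine replacement for the bare citation.
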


\section{A Portmanteau Theorem}\label{sec:portproof}

The final result we need is analogous to Theorem 1.1 from \cite{self}, a portmanteau theorem that lets us convert from Euler-Lagrange equations, to information projections, to Kullback Leibler (KL) divergence weighted control, to Onsager-Machlup functions for measures on classical Wiener space.

\begin{theorem}\label{theorem:Portmanteau}
Let $\mathcal C_0$ be a classical Wiener space with Cameron-Martin space $\mathcal H_{\mu_0}$ (see Definition \ref{def:CM-space}). Let $C:\mathcal C_0\to \mathbb R$ be a functional so that $\mu_0( C<+\infty)>0$ and $E_{\mu_0}[\exp(-C)\,|C|]<+\infty$. Furthermore, assume that the functional $\Phi:\mathcal C_0\to \mathbb R$ defined by $\Phi(z)=E_{\mu_0}[C(\omega+z)]$ exists and satisfies Hypothesis \ref{hypothesis:OM}. Define the probability measure $\mu^\ast$ on $\mathcal C_0$ with density 
$$\frac{d\mu^\ast}{d\mu_0}:=\frac{e^{-C}}{E_{\mu_0}[e^{-C}]},$$
and the probability measure $\tilde \mu$ on $\mathcal C_0$ with density
$$\frac{d\tilde{\mu}}{d\mu_0}:=\frac{e^{-\Phi}}{E_{\mu_0}[e^{-\Phi}]}.$$
Let $\mathcal P$ be the set of Gaussian shift measures which are absolutely continuous with respect to $\mu_0$ (defined in Eq. \eqref{eq:P-set-def}). Define $\mathcal P_a$ to be the subset of $\mathcal P$ defined in \eqref{eq:P-set-def} defined by 
\begin{equation}
   \mathcal P_a :=\{T_h^\ast(\mu_0):h\in \mathcal H_{\mu_0}, h(T)=a\}.
\end{equation}

 Furthermore, suppose that $C$ is such that that $E_{\mu_0}[C^2]<\infty$ and therefore there is a progressively measurable process $\Gamma(t,B(t))$ so that $C=\int_0^T \Gamma(t,B(t))\cdot dB(t)$ by It\^{o}'s representation theorem. Let $L(t,z,\dot z)$ be the Lagrangian 

\begin{equation}\label{eq:lagrangian-definition}
    L(t,q(t),\dot{q}(t);\,\Gamma) := E_{\mu_0}\left[\|\dot{q}(t)-\Gamma(t,B(t)+q(t))\|^2 \right].
\end{equation}

Suppose that there exist $\alpha >0$ and $\beta \geq 0$ such that $L(t,x,\dot x) \geq \alpha |\dot x|^2 - \beta$ for all $t \in [0,T]$, $x \in \mathbb R^n$, and $\dot{x} \in \mathbb R^n$. Consider the calculus of variations problem:

Consider the following four optimization problems:

(a) (Information projection) $$\mathbb K:=\inf_{\mu\in \mathcal P_a} D_{KL}(\mu||\mu^\ast).$$

(b) (State independent KL-weighted control) 
$$\mathbb P:=\inf_{\mu\in \mathcal P_a} \left\{E_\mu [C]+D_{KL}(\mu||\mu_0)\right\}.$$

(c) (Conditional mode of $\tilde \mu$) $$\mathbb M:=\inf_{z\in \mathcal H_{\mu_0}, z(T)=a} OM_\Phi(z).$$

(d) (Calculus of variations)
$$
\mathbb L : \inf_{z \in W_0^{1,2}[0,T]} \left\{ \mathcal{L}(z,\dot{z};\,\Gamma) := \frac{1}{2} \int_0^T L(t,z(t),\dot{z}(t);\,\Gamma)dt \text{ s.t. } z(0) = 0, z(T)=a \right\}.
$$
Then, the optimal values $\operatorname{val}(\mathbb K)$, $\operatorname{val}(\mathbb P)$, $\operatorname{val}(\mathbb M)$ and $\operatorname{val}(\mathbb L)$ are all attained (and so all three problems have optimal solutions). By the Cameron-Martin theorem (see Theorem \ref{theorem:CM}), we can identify each shift measure $\mu_z \in \mathcal P$ with its corresponding shift $z\in \mathcal H_{\mu_0}$, and so we have

\[
     \operatorname{sol}(\mathbb K)=\operatorname{sol}(\mathbb P) \equiv \operatorname{sol}(\mathbb M)=\operatorname{sol}(\mathbb L).
\]
In the above, we have used equivalence ``$\equiv$'' for shorthand to denote that $\mu_z\in \operatorname{sol}(\mathbb P)$ (or $\mu_z\in \operatorname{sol}(\mathbb K)$) if and only if its corresponding shift $z\in \operatorname{sol}(\mathbb M).$ (or $z\in \operatorname{sol}(\mathbb L)$).

\end{theorem}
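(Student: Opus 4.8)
The plan is to run a chain of changes of representation linking the four problems, and to prove existence for the two concrete problems $\mathbb M$ and $\mathbb L$ by the direct method, transporting it to $\mathbb K$ and $\mathbb P$ along the equivalences. For $\operatorname{sol}(\mathbb K)=\operatorname{sol}(\mathbb P)$: the assumption $E_{\mu_0}[C^2]<\infty$ forces $C<\infty$ $\mu_0$-a.s., so $\mu^\ast\sim\mu_0$, and the standing hypotheses give $0<E_{\mu_0}[e^{-C}]<\infty$. For $\mu\in\mathcal P_a$, writing $\frac{d\mu}{d\mu^\ast}=\frac{d\mu}{d\mu_0}\cdot\frac{E_{\mu_0}[e^{-C}]}{e^{-C}}$ and integrating its logarithm against $\mu$ gives $D_{KL}(\mu\|\mu^\ast)=D_{KL}(\mu\|\mu_0)+E_\mu[C]+\log E_{\mu_0}[e^{-C}]$, in which, for $\mu=T_h^\ast(\mu_0)$, the term $E_\mu[C]=E_{\mu_0}[C(\omega+h)]=\Phi(h)$ is finite by Hypothesis \ref{hypothesis:OM}(i) and (ii) and $D_{KL}(\mu\|\mu_0)=\tfrac12\|h\|_{\mu_0}^2<\infty$. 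Thus the objectives of $\mathbb K$ and $\mathbb P$ differ by the constant $\log E_{\mu_0}[e^{-C}]$, so they have equal values and equal minimizer sets. For $\operatorname{sol}(\mathbb P)\equiv\operatorname{sol}(\mathbb M)$: by the Cameron--Martin theorem $h\mapsto T_h^\ast(\mu_0)$ is a bijection from $\{h\in\mathcal H_{\mu_0}:h(T)=a\}$ onto $\mathcal P_a$, and along it, using $E_\mu[C]=\Phi(h)$ together with Theorem \ref{theorem:KL-for-shift} and the formula $OM_\Phi(h)=\Phi(h)+\tfrac12\|h\|_{\mu_0}^2$ of Theorem \ref{theorem:Stuart-OM}, the $\mathbb P$-objective at $T_h^\ast(\mu_0)$ equals $OM_\Phi(h)$; hence $\mathbb P$ and $\mathbb M$ are the same problem under this identification (and $OM_\Phi\equiv+\infty$ off $\mathcal H_{\mu_0}$, so nothing is lost).

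The remaining link, $\operatorname{sol}(\mathbb M)=\operatorname{sol}(\mathbb L)$, is obtained by unfolding $\Phi$ through its definition and the stochastic-integral representation of $C$. Using $C=\int_0^T\Gamma(t,B(t))\cdot dB(t)$ (up to the additive mean, which only shifts $\Phi$ by a constant) and Girsanov's theorem (Theorem \ref{theorem:Girsanov1}) to push the Cameron--Martin translation $\omega\mapsto\omega+z$ with $z\in W_0^{1,2}$ through the integral, the translated integral splits into a stochastic integral with zero $\mu_0$-expectation (under the standing $L^2$ integrability) plus a finite-variation drift term, so $\Phi(z)=E_{\mu_0}[C(\omega+z)]$ reduces to a deterministic time integral involving $E_{\mu_0}[\Gamma(t,B(t)+z(t))]$. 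Substituting this into $OM_\Phi(z)=\tfrac12\int_0^T\|\dot z(t)\|^2dt+\Phi(z)$, reorganizing the It\^o/Girsanov correction terms and completing the square, one arrives at $OM_\Phi(z)=\tfrac12\int_0^T L(t,z(t),\dot z(t);\Gamma)\,dt+\kappa=\mathcal L(z,\dot z;\Gamma)+\kappa$ with $\kappa$ independent of $z$. Since $OM_\Phi$ and $\mathcal L$ then differ by a constant on the common feasible set $\{z\in W_0^{1,2}:z(T)=a\}$, they have the same minimizers.

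Existence I would establish for $\mathbb M$ (equivalently $\mathbb L$) by the direct method. The feasible set $\{z\in\mathcal H_{\mu_0}:z(0)=0,\,z(T)=a\}$ is a closed affine subspace of the Hilbert space $\mathcal H_{\mu_0}$ (hence weakly closed), since $z\mapsto z(T)$ is bounded linear, $|z(T)|\le\sqrt T\,\|z\|_{\mu_0}$. For a minimizing sequence $\{z_n\}$, Hypothesis \ref{hypothesis:OM}(i) with $\varepsilon<\tfrac1{2T}$ and the bound $\|z\|_\infty^2\le T\|z\|_{\mu_0}^2$ give $OM_\Phi(z)\ge M+(\tfrac12-\varepsilon T)\|z\|_{\mu_0}^2$, so $\{z_n\}$ is bounded in $\mathcal H_{\mu_0}$; extract a subsequence with $z_n\rightharpoonup z_\star$, still satisfying the endpoint constraints. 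The embedding $W_0^{1,2}\hookrightarrow\mathcal C_0$ is compact (Arzel\`a--Ascoli, via $\|z(t)-z(s)\|\le\|\dot z\|_{L^2}\sqrt{|t-s|}$), so $z_n\to z_\star$ in $\|\cdot\|_\infty$, whence $\Phi(z_n)\to\Phi(z_\star)$ by Hypothesis \ref{hypothesis:OM}(iii), while $\|z_\star\|_{\mu_0}\le\liminf\|z_n\|_{\mu_0}$; therefore $OM_\Phi(z_\star)\le\liminf OM_\Phi(z_n)=\operatorname{val}(\mathbb M)$. (Treating $\mathbb L$ directly is equally routine: the bound $L(t,x,\dot x)\ge\alpha|\dot x|^2-\beta$ gives coercivity, and convexity of $L$ in its last argument gives weak lower semicontinuity of $\mathcal L$ by Tonelli's theorem.) Existence for $\mathbb K$ and $\mathbb P$ then follows from the identifications.

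The main obstacle is the $\mathbb M\to\mathbb L$ step: rigorously justifying the exchange of $E_{\mu_0}$ with the stochastic integral after the Girsanov translation, so that the martingale part of $C(\omega+z)$ drops out (this is exactly where the $L^2$ hypothesis on $C$ and the coercivity bound on $L$ enter), identifying the regularity of $\Gamma$ that this requires, and keeping the It\^o/Girsanov correction bookkeeping precise enough that the discrepancy between $OM_\Phi$ and $\mathcal L$ collapses to a single constant rather than a genuine $z$-dependent remainder. The other three links are a routine chain of changes of variables, and the existence argument is a standard direct-method application.
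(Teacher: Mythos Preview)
Your chain $\mathbb K\leftrightarrow\mathbb P\leftrightarrow\mathbb M$ matches the paper exactly: the paper also splits $D_{KL}(\mu\|\mu^\ast)=E_\mu[C]+D_{KL}(\mu\|\mu_0)+\log E_{\mu_0}[e^{-C}]$, then identifies $E_\mu[C]=\Phi(h)$ and $D_{KL}(\mu\|\mu_0)=\tfrac12\|h\|_{\mu_0}^2$ via Cameron--Martin and Theorem~\ref{theorem:KL-for-shift}. Your existence argument by the direct method (coercivity of $OM_\Phi$ from Hypothesis~\ref{hypothesis:OM}(i), weak compactness in $\mathcal H_{\mu_0}$, compact embedding into $\mathcal C_0$, and local Lipschitz continuity of $\Phi$) is more explicit than the paper's, which simply cites the coercivity hypothesis on $L$ to assert a minimizer for $\mathbb L$ and transports it.

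The substantive divergence is in how $\mathbb L$ is tied in. You attempt $(\mathbb M)\leftrightarrow(\mathbb L)$ directly: compute $\Phi(z)$ from the translated It\^o integral, insert into $OM_\Phi$, and complete the square. The paper instead links $(\mathbb K)\leftrightarrow(\mathbb L)$ directly, never passing through $\Phi$: it writes down $d\mu/d\mu^\ast$, applies Girsanov under $\mu$ so that $B=\tilde B+q$, drops the $d\tilde B$ integral by the martingale property, and obtains $D_{KL}(\mu\|\mu^\ast)=\tfrac12\int_0^T E_{\mu_0}[\|\dot q-\Gamma(t,B+q)\|^2]\,dt$ with the square already completed. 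The point is that the $\|\Gamma\|^2$ term is supplied by the quadratic part of the Girsanov exponent for $\mu^\ast$, not manufactured after the fact.

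This is exactly where your sketch has a gap. Your translation argument gives $\Phi(z)=\int_0^T E_{\mu_0}[\Gamma(t,B+z)]\cdot\dot z\,dt$, so
\[
OM_\Phi(z)-\mathcal L(z)=2\int_0^T E_{\mu_0}[\Gamma(t,B+z)]\cdot\dot z\,dt-\tfrac12\int_0^T E_{\mu_0}[\|\Gamma(t,B+z)\|^2]\,dt,
\]
and the second term is genuinely $z$-dependent; there is no ``It\^o/Girsanov correction'' left over from the translation of $C$ that produces it (the translated integral is $\int\Gamma(t,B+z)\,dB+\int\Gamma(t,B+z)\dot z\,dt$, with no quadratic-variation correction since $z$ has finite variation). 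So the completing-the-square route, as you describe it, does not close. The paper avoids this by never isolating $\Phi$: it keeps both the $\dot q$-shift and the $\Gamma$-shift inside a single Radon--Nikodym derivative $d\mu/d\mu^\ast$, so the cross term and the $\|\Gamma\|^2$ term appear together and the polarization identity applies cleanly. If you want to salvage a direct $(\mathbb M)\leftrightarrow(\mathbb L)$ argument, the cleanest fix is to route it through $\mathbb K$ as the paper does.
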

\begin{proof} 
\textbf{Equivalence of (a) and (b)}
Under assumptions the $\mu_0( C<+\infty)>0$ and $E_{\mu_0}[\exp(-C)\,|C|]<+\infty$ the measure $\mu^\ast$ is well defined, see Lemma 2.4 in \cite{Bierkens-Kappen}. Then we can note that for any $\mu$ equivalent to $\mu^\ast$ we have that 
\begin{align*}
    D_{KL}(\mu||\mu^\ast) &=E_\mu \left[\log \left(\frac{d\mu}{d\mu^\ast}\right)\right]\\
    &=E_\mu \left[\log \left(\frac{d\mu}{d\mu_0}\right)\right]-E_\mu \left[\log \left(\frac{d\mu^\ast}{d\mu_0}\right)\right]\\
    &=E_\mu[C]+D_{KL}(\mu||\mu_0)-\log E_{\mu_0}[e^{-C}].
\end{align*}
Taking an infimum of both sides shows that problem (a) is equivalent to problem (b).

\textbf{Equivalence of (a) and (d)}
We just have to check that for all $\mu\in\mathcal P$ where $\mathcal P$ is defined in \eqref{eq:P-set-def}, with shift $q\in \mathcal H_{\mu_0}$ we have that \[D_{KL}(\mu||\mu^\ast)=\frac{1}{2}\int_0^T L(t,q(t),\dot q(t);\Gamma) dt.\]
To this aim, we note that if $\mu\in \mathcal P$ where $\mathcal P$ is defined in \eqref{eq:P-set-def} with corresponding shift $q$ that 
\[\frac{d\mu}{d\mu^\ast}=\exp\left(\int_0^T (\dot q-\Gamma(t, B(t)))\cdot dB(t)-\frac{1}{2}\int_0^T \|\dot q\|^2-\|\Gamma(t,B(t))\|^2 ds\right).\]
By Girsanov's theorem, Theorem \ref{theorem:Girsanov1}, we have that $B(t)=\tilde B(t)+\dot q(t)$ where $\tilde B(t)$ is a Brownian motion under $\mu$. Therefore the KL divergence is thus
\begin{align*}
    D_{KL}(\mu||\mu^\ast)&=E_{\mu}\bigg[\int_0^T (\dot q(t) -\Gamma(t, \tilde B(t)+q(t)))\cdot d(\tilde B(t)+q(t))\\
    &\qquad -\frac{1}{2}\int_0^T (\|\dot q\|^2-\|\Gamma(t,\tilde B(t)+q(t))\|^2) dt\bigg].
\end{align*}
Using the mean zero property of It\^{o} integration yields that 
\begin{align*}
    D_{KL}(\mu||\mu^\ast)&=E_{\mu}\bigg[\int_0^T (\|\dot q(t)\|^2-\Gamma(t, \tilde B(t)+q(t)))\cdot \dot q(t)) dt\\
    &\qquad-\frac{1}{2}\int_0^T (\|\dot q\|^2-\|\Gamma(t,\tilde B(t)+q(t))\|^2 )dt \bigg].
\end{align*}
Therefore using Fubini's theorem and a polarization identity we conclude that
\[D_{KL}(\mu||\mu^\ast)=\frac{1}{2}\int_0^T E_{\mu_0}\left[\|\dot{q}(t)-\Gamma(t,B(t)+q(t))\|^2\right] dt. \]
The assumptions on the Lagrangian $L$ imply that there is a unique solution to problem (d) and hence there is a solution to problem (a).

\textbf{Equivalence of (a) and (c)}
Using Theorem \ref{theorem:KL-for-shift} and the definition of $\Phi$ for each $\mu\in \mathcal P$ with corresponding shift $z_\mu\in \mathcal H_{\mu_0}$, we have that 
\begin{align*}
   D_{KL}(\mu||\mu^\ast)&=E_\mu[C]+D_{KL}(\mu||\mu_0)-\log E_{\mu_0}[e^{-C}]\\
    &=E_{\mu_0}[C(\omega+z_\mu)]+\frac{1}{2}\|z_\mu\|_{\mu_0}^2-\log E_{\mu_0}[e^{-C}]\\
    &=\Phi(z_\mu)+\frac{1}{2}\|z_\mu\|_{\mu_0}^2-\log E_{\mu_0}[e^{-C}]\\
    &= OM_\Phi(z_\mu) - \log E_{\mu_0}[e^{-C}].
\end{align*}
Using the equivalence between (a) and (d), along with the assumptions on the Lagrangian $L$ concludes.

\end{proof}

\begin{remark}
Theorem \ref{theorem:Portmanteau} is based on \cite[Theorem 1.1]{self}. However, there are three key differences. First, parts (a), (b), and (c) are stated in terms of a general Gaussian measure in \cite[Theorem 1.1]{self} whereas in Theorem \ref{theorem:Portmanteau} it is stated for a $n$ dimensional Brownian motion. Second, part (d) in \cite[Theorem 1.1]{self} is only stated in terms of a $1$ dimensional Brownian motion whereas in Theorem \ref{theorem:Portmanteau} it is for $n$ dimensions. Third, in \cite[Theorem 1.1]{self}, everything is stated in terms of unconditional mode instead of conditional mode from Theorem \ref{theorem:Portmanteau}.
\end{remark}

\section{Proof of Feynman-Kac for ODEs}\label{sec:proof}
In this section, we restate Theorem \ref{thm:-main} in more detail and offer a proof.  For pedagogical reasons, we first prove the result for a single ODE. We then extend to the case of a system of ODEs. The proof for the system is similar but it is included for completeness.

\subsection{Feynman-Kac for single ODE}
In this subsection we prove our Feynman-Kac result for a single linear second order ODE. First we prove the case in which $f(t)=0$ and thus $V(t)=0$ and we are working with no exponential weight. We then use this to construct the solution for general $f$. 
First we have the following lemma.
\begin{lemma}\label{lemma:muast-mutilde}
If $\Gamma(t,x)=A(t)x+D(t)$ for $C^1([0,T],\mathbb R^{n\times n})$ matrix $A$ and $C^1([0,T],\mathbb R^n)$ vector $D$, then $\mu^\ast=\tilde \mu$.
\end{lemma}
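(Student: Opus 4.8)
The plan is to reduce the claim to the statement that, as functionals on $\mathcal C_0$, $C$ and the functional $\Phi$ of Theorem~\ref{theorem:Portmanteau} differ only by a $z$-independent constant (that is, $C$ equals $\Phi$ evaluated along the Brownian path, plus a deterministic constant, $\mu_0$-a.s.). Once this is in hand, the constant cancels in the two normalizations $E_{\mu_0}[e^{-C}]$ and $E_{\mu_0}[e^{-\Phi}]$, so that $\tfrac{d\mu^\ast}{d\mu_0}=\tfrac{d\tilde\mu}{d\mu_0}$ and hence $\mu^\ast=\tilde\mu$. Accordingly I will compute $C$ and $\Phi$ separately and exhibit a single continuous path functional $G\colon\mathcal C_0\to\mathbb R$ that realizes each of them up to an explicit constant.

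First I would turn the It\^{o} integral $C=\int_0^T (A(t)B(t)+D(t))\cdot dB(t)$ into a genuine functional of the Brownian path. Applying It\^{o}'s formula to $(t,x)\mapsto \tfrac12 (A(t)x)\cdot x$, integrating by parts in the deterministic term $\int_0^T D(t)\cdot dB(t)$, and using $B(0)=0$ gives
$$ C \;=\; G(B)\;-\;\tfrac12\int_0^T \operatorname{tr}A(t)\,dt, $$
where
$$ G(\omega)\;:=\;\tfrac12\,(A(T)\omega(T))\cdot\omega(T)\;-\;\tfrac12\int_0^T (A'(t)\omega(t))\cdot\omega(t)\,dt\;+\;D(T)\cdot\omega(T)\;-\;\int_0^T D'(t)\cdot\omega(t)\,dt. $$
Here I use $A,D\in C^1$ so that the Riemann--Stieltjes integrations by parts are legitimate, and (in the matrix case) that $A$ may be taken symmetric, which is automatic when $n=1$; the functional $G$ is then $\|\cdot\|_\infty$-continuous on $\mathcal C_0$.

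Next I would compute $\Phi$. Writing $C(\omega+z)=G(\omega+z)-\tfrac12\int_0^T\operatorname{tr}A(t)\,dt$ and expanding the quadratic functional $G(\omega+z)$, the terms coupling $\omega$ and $z$ are linear in $\omega$ and so have vanishing $\mu_0$-expectation, and on the remaining pure-$\omega$ quadratic terms one uses $E_{\mu_0}[\omega(t)]=0$ and $E_{\mu_0}[\omega_i(t)\omega_j(t)]=\delta_{ij}t$; after one integration by parts in $t$ this yields
$$ E_{\mu_0}\big[G(\omega+z)\big]\;=\;\tfrac12\int_0^T\operatorname{tr}A(t)\,dt\;+\;G(z). $$
Hence $\Phi(z)=E_{\mu_0}[C(\omega+z)]=G(z)$ for every $z$, so the functional governing the density of $\tilde\mu$ is exactly $G$. (For $z\in\mathcal H_{\mu_0}$ one sees this even more directly: the It\^{o}-integral part of $C(\omega+z)$ has mean zero, leaving $\Phi(z)=\int_0^T(A(t)z(t)+D(t))\cdot\dot z(t)\,dt$, which integrates by parts to $G(z)$.)

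Combining the two computations, $e^{-C}=e^{\frac12\int_0^T\operatorname{tr}A(t)\,dt}\,e^{-G(B)}$ while $e^{-\Phi}=e^{-G}$, so the exponential prefactor cancels under normalization and
$$ \frac{d\mu^\ast}{d\mu_0}=\frac{e^{-C}}{E_{\mu_0}[e^{-C}]}=\frac{e^{-G}}{E_{\mu_0}[e^{-G}]}=\frac{e^{-\Phi}}{E_{\mu_0}[e^{-\Phi}]}=\frac{d\tilde\mu}{d\mu_0}, $$
i.e.\ $\mu^\ast=\tilde\mu$. The one delicate point, and the thing to get right, is the bookkeeping of the It\^{o} correction $\tfrac12\int_0^T\operatorname{tr}A(t)\,dt$: it is produced once when converting the stochastic integral $C$ into the path functional $G$, and a second time --- with exactly the same value --- when averaging $G(\omega+z)$ over the Brownian path, and the exact cancellation of these two contributions is what forces $\Phi(z)=G(z)$ identically rather than merely up to a $z$-independent shift, which is precisely the content of $\mu^\ast=\tilde\mu$.
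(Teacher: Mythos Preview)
Your argument is correct, and it is more careful than the paper's own proof. The paper's proof is a four-line computation: expand $\Phi(z)=E_{\mu_0}\big[\int_0^T[A(B+z)+D]\cdot d(B+z)\big]$, use that It\^o integrals have mean zero and $E_{\mu_0}[B(t)]=0$ to reduce to $\Phi(z)=\int_0^T[Az+D]\cdot dz$, and then declare this last expression to be ``$C(z)$,'' so that $C=\Phi$. By contrast, you take the extra step of rewriting the It\^o integral $C$ as a genuine $\|\cdot\|_\infty$-continuous path functional $G$ minus the It\^o correction $\tfrac12\int_0^T\operatorname{tr}A(t)\,dt$, and you show that this same correction reappears when averaging $G(\omega+z)$ over $\omega$, so that $\Phi=G$ exactly and $C=\Phi-\text{const}$.

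What your route buys is rigor at precisely the point the paper glosses over: the symbol $\int_0^T[A\omega+D]\cdot d\omega$ means a Riemann--Stieltjes integral when $\omega\in\mathcal H_{\mu_0}$ but an It\^o integral when $\omega$ is a Brownian path, and these two readings differ by your constant (e.g., for $n=1$, $A\equiv1$, $D\equiv0$ one has $C=\tfrac12 B(T)^2-\tfrac T2$ while $\Phi(z)=\tfrac12 z(T)^2$). So the paper's assertion ``$C=\Phi$'' is not literally true, though the constant cancels under normalization and the lemma still holds; your bookkeeping makes this honest. Your argument also makes it visible that $\Phi$ extends to a locally Lipschitz functional on all of $\mathcal C_0$, which is what Hypothesis~\ref{hypothesis:OM} requires.

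The one place your argument is narrower than the lemma's statement is the symmetry assumption on $A$ for $n>1$: your It\^o-formula identity for $\tfrac12(A x)\cdot x$ only captures the symmetric part of $A$, so for a nonsymmetric $A$ there is a residual L\'evy-area-type term $\int_0^T(A_{\mathrm{skew}}B)\cdot dB$ that is not a continuous functional of the path and cannot be absorbed into $G$. In that regime, however, $\Phi$ itself fails to extend continuously to $\mathcal C_0$ and Hypothesis~\ref{hypothesis:OM} breaks down, so the restriction is natural rather than a defect of your method.
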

\begin{proof}
We just have to show that $C=\Phi$. Recall that $C = \int_0^T \Gamma(t,B(t)) \cdot dB(t)$. To the stated aim, we write that 
\begin{align*}
    \Phi(z)&=E_{\mu_0}[C(\omega+z)]\\
    &=E_{\mu_0}\left[\int_0^T [A(t)(B(t)+z(t))+D(t)]\cdot d(B(t)+z(t))\right]\\
    &=\int_0^T [A(t)z(t)+D(t)]\cdot dz(t)\\
    &=C(z).
\end{align*}
\end{proof}

\begin{proposition}\label{prop:-f}
Consider the differential equation
\begin{equation}\label{eq:main-eq2}
    \ddot y(t)+g(t)y(t)+h(t)=0,
\end{equation}
where the unknown $y\in C^2([0,T],\mathbb R)$, 
with boundary conditions $y(0)=0$ and $y(T)=a$. 
Let $g(t)$ and $h(t)$ be $C^1([0,T], \mathbb R)$ functions so that the differential equations
\begin{equation}\label{eq:A-ODE}
    A^2(t)+\dot A(t)=-g(t)
\end{equation}
and
$$\dot D(t)+D(t)A(t)=-h(t)$$
have solutions, $A(t),D(t)$. Let $X$ be the diffusion corresponding to $\mu^\ast$ given in Theorem \ref{theorem:Portmanteau} with $\Gamma(t,x)=A(t)x+D(t)$.
Then the solution to \eqref{eq:main-eq2} is 
\begin{equation}\label{eq:solution2}
    y(t)=\operatorname{Mode}\left[ X(t)\bigg|  X(T)=a\right],
\end{equation}
Here, the conditional mode of a diffusion is defined as the minimizer of the Onsager-Machlup function defined in Theorem \ref{theorem:Portmanteau} over all paths $z$ with $z(T)=a$, as in Remark \ref{rmk:conditional}.
\end{proposition}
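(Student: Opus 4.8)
The plan is to reduce the claim to the calculus-of-variations problem (d) of Theorem \ref{theorem:Portmanteau} and to recognize its Euler--Lagrange equation as \eqref{eq:main-eq2}. First I would apply Lemma \ref{lemma:muast-mutilde}: since $\Gamma(t,x)=A(t)x+D(t)$ is affine with $A,D\in C^1$, we get $C=\Phi$ and $\mu^\ast=\tilde\mu$, so the Onsager--Machlup function attached to $X$ is $OM_\Phi$ and $\operatorname{Mode}[X(t)\mid X(T)=a]$ is exactly the minimizer $\operatorname{sol}(\mathbb M)$ of $OM_\Phi$ over $\{z\in\mathcal H_{\mu_0}:z(T)=a\}$. (One also checks the running hypotheses of Theorem \ref{theorem:Portmanteau}: $E_{\mu_0}[C^2]=\int_0^T(A(t)^2t+D(t)^2)\,dt<\infty$ supplies the It\^o representation with this $\Gamma$, and the affine structure makes $\Phi$ the quadratic functional to which Hypothesis \ref{hypothesis:OM} applies.) By the identification $\operatorname{sol}(\mathbb M)=\operatorname{sol}(\mathbb L)$ in Theorem \ref{theorem:Portmanteau}, it then suffices to solve problem (d).

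Next I would evaluate the Lagrangian. For $\Gamma(t,x)=A(t)x+D(t)$ and a one-dimensional Brownian motion $B$, a direct computation gives $L(t,z(t),\dot z(t);\Gamma)=E_{\mu_0}[(\dot z(t)-A(t)(B(t)+z(t))-D(t))^2]=(\dot z(t)-A(t)z(t)-D(t))^2+A(t)^2t$, whose last summand is independent of $z$. Hence (d) amounts to minimizing $\tfrac12\int_0^T(\dot z-Az-D)^2\,dt$ over $z\in W_0^{1,2}[0,T]$ with $z(T)=a$. Taking the first variation yields the weak Euler--Lagrange equation $\int_0^T(\dot z-Az-D)(\dot\phi-A\phi)\,dt=0$ for all $\phi\in W_0^{1,2}$ with $\phi(T)=0$; after a regularity bootstrap this becomes the classical equation $\ddot z-(\dot A+A^2)z-(\dot D+AD)=0$, which collapses to precisely $\ddot z+gz+h=0$ upon substituting the defining identities $\dot A+A^2=-g$ and $\dot D+AD=-h$ from \eqref{eq:A-ODE}. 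The boundary data $z(0)=0$ and $z(T)=a$ are built into the admissible class, so the minimizer of (d) solves the boundary value problem \eqref{eq:main-eq2}, and conversely any $C^2$ solution of \eqref{eq:main-eq2} is a critical point of the (strictly convex) functional in (d); strict convexity holds because $z\mapsto\dot z-Az-D$ is injective on $W_0^{1,2}$ by linear ODE uniqueness, which also makes the solution of \eqref{eq:main-eq2} unique and the minimizer of (d) unique, so ``the solution'' in the statement is unambiguous and coincides with $\operatorname{Mode}[X(t)\mid X(T)=a]$.

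The step I expect to be the main obstacle is the bridge between the abstract optimizer furnished by Theorem \ref{theorem:Portmanteau}, which a priori lives only in $\mathcal H_{\mu_0}=W_0^{1,2}$ and is characterized variationally, and a genuine $C^2$ solution of the boundary value problem: one must run the elliptic-type bootstrap on the weak Euler--Lagrange equation (setting $v:=\dot z-Az-D$, the equation reads $\dot v=-Av$ weakly, so $v\in W^{1,2}$; since $A,D\in C^1$ one then iterates to get $z\in C^2$) and then confirm that the strictly convex variational problem and the linear boundary value problem have the same unique solution. The purely formal Euler--Lagrange computation and the cancellation through the Riccati equation \eqref{eq:A-ODE} are routine by comparison, as is checking that the coercivity hypothesis of Theorem \ref{theorem:Portmanteau} is met here (the map $z\mapsto Az$ is of lower order, so $\|\dot z-Az\|_{L^2}$ and $\|\dot z\|_{L^2}$ are comparable for large $\|\dot z\|_{L^2}$).
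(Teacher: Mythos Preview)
Your proposal is correct and follows essentially the same approach as the paper: both compute the Euler--Lagrange equation of the Lagrangian from Theorem \ref{theorem:Portmanteau} with $\Gamma(t,x)=A(t)x+D(t)$, recognize it as \eqref{eq:main-eq2} via the Riccati identities, and invoke Lemma \ref{lemma:muast-mutilde} to identify $\mu^\ast$ with $\tilde\mu$. Your additional care with the weak-to-classical regularity bootstrap and the strict-convexity/uniqueness argument goes beyond what the paper spells out, but the underlying strategy is the same.
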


\begin{proof}
Let $\Gamma(t,x)=A(t) x+D(t)$ for the Lagrangian defined in Theorem \ref{theorem:Portmanteau}. The Lagrangian is thus 
\begin{equation}
    L(t,q,\dot q)=E_{\mu_0}\left[(\dot q(t)-A(t)(B(t)+q(t))-D(t))^2\right].
\end{equation}
First, we note that all of the hypotheses of Theorem \ref{theorem:Portmanteau} hold as $A,D$ are both $C^1$ functions by assumption. Computing the derivatives of $L$ yields that
\begin{equation*}
    L_{q}(t,q,\dot q)=-2 A(t)\left(\dot q(t)-A(t)q(t)-D(t)\right)
\end{equation*}
and
\begin{equation*}
    L_{\dot q}(t,q,\dot q)=2\left(\dot q(t)-A(t)q(t)-D(t)\right).
\end{equation*}
The Euler Lagrange equation for $L$ is then
\begin{equation*}
    -A(t)\left(\dot q(t)- A(t)q(t)-D(t)\right)-\ddot q(t)+\dot A(t)q(t)+ A(t)\dot q(t)+\dot D(t)=0.
\end{equation*}
We may simplify this as 
\begin{equation}
    -\ddot q(t)+q(t)\left(A^2(t)+\dot A(t)\right)+\left(\dot D(t)+A(t)D(t)\right)=0.
\end{equation}

By the assumption of $A$ and $D$, we have that the Euler Lagrange equation for the Lagrangian $L$ is equation \eqref{eq:main-eq2}. Therefore by Theorem \ref{theorem:Portmanteau} we have that the solution to equation \eqref{eq:main-eq2} is the conditional mode of $\tilde \mu$. By Lemma \ref{lemma:muast-mutilde} we have that $\mu^\ast=\tilde \mu$. Therefore the solution to equation \eqref{eq:main-eq2} is the conditional mode of $\mu^\ast$. 
\end{proof}

\begin{remark}
The equation 
\begin{equation}\label{eq:A-eq}
A^2(t)+\dot A(t)=-g(t)
\end{equation}
is an example of a Riccati equation, whose general form is 
$$\dot A(t)=\gamma_0(t)+\gamma_1(t) A(t)+\gamma_2(t) A^2(t),$$
where $\gamma_i$ are continuous functions and $\gamma_0,\gamma_2\neq 0$. See \cite{Riccati} for conditions on solving Riccati equations explicitly. To this aim, they convert the Riccati equation to a second order linear differential equation
$$\ddot z(t)+(-\gamma_1(t)+\dot \gamma_2(t)/\gamma_2(t))\dot z(t)+\gamma_0 (t)\gamma_2(t) z(t)=0,$$
where $A(t)=-\dot z(t)/(\gamma_2(t) z(t)).$
In the case of Theorem \ref{prop:-f}, the Riccati equation \eqref{eq:A-eq} turns into the homogeneous equation 
\begin{equation*}
    \ddot z(t)+g(t)z(t)=0,
\end{equation*}
with $A(t)=-\dot z(t)/z(t).$ Note however that this is not quite the homogeneous version of \eqref{eq:main-eq2} because in \eqref{eq:main-eq2} we insist that $y(0)=0$. 
\end{remark}

Explicitly giving $C$ for an equation \eqref{eq:main-eq2} comes down to the solvability of the equation \eqref{eq:A-eq}. We give an example of when \eqref{eq:A-eq} is explicitly solvable as a corollary. 
\begin{corollary}
Consider the differential equation
\begin{equation}\label{eq:main-eq-corollary}
    \ddot y(t)+g y(t)+h(t)=0,
\end{equation}
where the unknown $y\in C^2([0,T],\mathbb R)$, 
with boundary conditions $y(0)=0$ and $y(T)=a$. 
Let $h(t)$ be $C^1([0,T], \mathbb R)$ and let $g\leq 0$ be a constant. Then the solution to \eqref{eq:main-eq-corollary} is 
\begin{equation}\label{eq:solution-corollary}
    y(t)=\operatorname{Mode}\left[ X(t)\bigg| X(T)=a\right],
\end{equation}
where $ X$ is the diffusion 
\[X(t)=\int_0^t \left(\sqrt{-g} B(u)-e^{-\sqrt{-g} }\int_1^u e^{s\sqrt{-g}}h(s)ds\right) du +B(t).\]
\end{corollary}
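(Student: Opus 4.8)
The plan is to derive this corollary as a direct specialization of Proposition \ref{prop:-f}, so the only real work is solving the two auxiliary ODEs explicitly when $g \le 0$ is constant. First I would set $g(t) \equiv g$ (a constant) and observe that the Riccati equation \eqref{eq:A-ODE}, namely $A^2(t) + \dot A(t) = -g$, admits the constant solution $A(t) \equiv \sqrt{-g}$ (one checks $\dot A = 0$ and $A^2 = -g$). This is legitimate precisely because $g \le 0$ makes $-g \ge 0$, so $\sqrt{-g}$ is real; this is the hypothesis that is actually being used. With this choice, $A$ is trivially $C^1$, so the solvability requirement in Proposition \ref{prop:-f} is met.

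Next I would solve the first-order linear ODE $\dot D(t) + D(t)A(t) = -h(t)$ with $A(t) = \sqrt{-g}$, i.e. $\dot D(t) + \sqrt{-g}\, D(t) = -h(t)$. Using the integrating factor $e^{t\sqrt{-g}}$, the general solution is $D(t) = e^{-t\sqrt{-g}}\left(\text{const} - \int_1^t e^{s\sqrt{-g}} h(s)\, ds\right)$; comparing with the stated diffusion, the lower limit of integration is taken to be $1$ (any choice of antiderivative works, since any solution $D$ of this linear ODE yields a valid $\Gamma$, and the conditional mode is determined by the Euler--Lagrange equation, not by the particular antiderivative). Then $\Gamma(t,x) = A(t)x + D(t) = \sqrt{-g}\, x + e^{-t\sqrt{-g}}\left(-\int_1^t e^{s\sqrt{-g}} h(s)\, ds\right)$, and substituting $x = B(t)$ into $X(t) = \int_0^t \Gamma(s, B(s))\, ds + B(t)$ (using the definition of $X$ as the $\mu^\ast$-diffusion from Theorem \ref{theorem:Portmanteau}, which has drift $\Gamma(t, B(t))$) gives exactly
\[
X(t) = \int_0^t \left( \sqrt{-g}\, B(u) - e^{-u\sqrt{-g}} \int_1^u e^{s\sqrt{-g}} h(s)\, ds \right) du + B(t),
\]
matching the corollary statement (with $e^{-u\sqrt{-g}}$ written as $e^{-\sqrt{-g}}$ raised implicitly inside the integral — I would keep the exponent explicit to avoid ambiguity).

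Finally, since $h \in C^1$ and $D$ as constructed is then $C^1$, all hypotheses of Proposition \ref{prop:-f} hold, and the conclusion \eqref{eq:solution2} specializes verbatim to \eqref{eq:solution-corollary}. I do not anticipate a serious obstacle here: the content is entirely the observation that a constant-coefficient Riccati equation with the right sign has a constant solution, plus a one-line integrating-factor computation. The only point requiring a sentence of care is noting that the choice of lower limit $1$ in the integral defining $D$ (equivalently, the choice of homogeneous-solution constant) is immaterial, because Proposition \ref{prop:-f} only requires \emph{some} solution $D$ of the linear ODE, and different choices of $D$ differ by a multiple of $e^{-t\sqrt{-g}}$ which still produces a $\Gamma$ whose associated Euler--Lagrange equation is \eqref{eq:main-eq-corollary}. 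I would also briefly remark that one should check $A(t) = \sqrt{-g}$ is compatible with the boundary condition setup, but since the boundary conditions $y(0)=0$, $y(T)=a$ are imposed on the variational problem (d) in Theorem \ref{theorem:Portmanteau} and not on $A$, there is nothing further to verify.
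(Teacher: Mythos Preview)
Your proposal is correct and follows exactly the approach the paper intends: the corollary is stated in the paper without a separate proof, as an immediate specialization of Proposition~\ref{prop:-f} once one observes that for constant $g\le 0$ the Riccati equation $A^2+\dot A=-g$ admits the constant solution $A\equiv\sqrt{-g}$ and the linear equation for $D$ is then solved by the integrating-factor formula you wrote. Your observation that the exponent in the stated diffusion should read $e^{-u\sqrt{-g}}$ rather than $e^{-\sqrt{-g}}$ is also correct; this appears to be a typographical slip in the paper.
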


\begin{remark}
One might ask if one can find a $\Gamma(t,x)$ that yields an Euler-Lagrange equation of the form \eqref{eq:main-eq} with $f\neq 0$. The answer is no, and the $\Gamma(t,x)$ used in Proposition \ref{prop:-f} is of maximal generality, as the below lemma will show.
\end{remark}

\begin{lemma}
Let $\Gamma(t,x)$ as in Portmanteau theorem be such that $\frac{\partial }{\partial x} \Gamma(t,x)$ depends nontrivially on $x$. Then the resulting Euler Lagrange equation has a nonlinearity in $q$. 
\end{lemma}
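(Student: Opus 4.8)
The plan is to compute the Euler--Lagrange equation for the Lagrangian
\[
L(t,q,\dot q) = E_{\mu_0}\!\left[\|\dot q(t) - \Gamma(t,B(t)+q(t))\|^2\right]
\]
in the case where $\Gamma(t,x)$ has nontrivial $x$-dependence in $\partial_x\Gamma$, and to read off from the $L_q$ term a contribution that is genuinely nonlinear in $q$. First I would expand the square, writing $L = \|\dot q\|^2 - 2\,\dot q\cdot E_{\mu_0}[\Gamma(t,B(t)+q)] + E_{\mu_0}[\|\Gamma(t,B(t)+q)\|^2]$, and define the shorthand $\bar\Gamma(t,q) := E_{\mu_0}[\Gamma(t,B(t)+q)]$ and $\overline{\Gamma^2}(t,q) := E_{\mu_0}[\|\Gamma(t,B(t)+q)\|^2]$. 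Then $L_{\dot q} = 2(\dot q - \bar\Gamma(t,q))$ and $L_q = -2\,\partial_q\bar\Gamma(t,q)\cdot\dot q + \partial_q \overline{\Gamma^2}(t,q)$, so the Euler--Lagrange equation becomes
\[
2\ddot q - 2\,\partial_t\bar\Gamma - 2\,\partial_q\bar\Gamma\cdot\dot q + 2\,\partial_q\bar\Gamma\cdot\dot q - \partial_q\overline{\Gamma^2} = 0,
\]
i.e. $2\ddot q = 2\,\partial_t\bar\Gamma + \partial_q\overline{\Gamma^2}(t,q)$. (In the proof of Proposition~\ref{prop:-f} the cross terms $\partial_q\bar\Gamma\cdot\dot q$ cancelled because $\Gamma$ was affine; here the relevant object to analyze is $\partial_q\overline{\Gamma^2}$.)

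Next I would argue that $\partial_q\overline{\Gamma^2}(t,q)$ is a nonlinear (in fact non-affine) function of $q$ precisely when $\partial_x\Gamma(t,x)$ depends on $x$. Differentiating under the expectation (justified by the $C^1$ hypotheses on $\Gamma$ together with Gaussian integrability of $B(t)$), $\partial_q\overline{\Gamma^2}(t,q) = 2\,E_{\mu_0}[(\partial_x\Gamma)^\top(t,B(t)+q)\,\Gamma(t,B(t)+q)]$. If this were affine in $q$, then its second $q$-derivative would vanish identically; computing that second derivative produces a term involving $E_{\mu_0}[(\partial_x\Gamma)^\top\partial_x\Gamma]$ plus a term with $\partial_{xx}\Gamma$, and one shows these cannot cancel for all $q$ unless $\partial_x\Gamma$ is constant in $x$. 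The cleanest route, which avoids a second-derivative computation, is to specialize: restrict to scalar $n=1$, use the Gaussian integration-by-parts (Stein) identity $E_{\mu_0}[B(t)\varphi(B(t))] = t\,E_{\mu_0}[\varphi'(B(t))]$, and show that the coefficient of the highest surviving power of $q$ in $\overline{\Gamma^2}(t,q)$ is strictly of degree $\ge 2$ whenever $\partial_x\Gamma$ is non-constant --- for instance it suffices to test $\Gamma(t,x)$ polynomial in $x$ of degree $m\ge 2$, where $\overline{\Gamma^2}(t,q)$ is a polynomial in $q$ of degree $2m$, hence $\partial_q\overline{\Gamma^2}$ has degree $2m-1\ge 3$, manifestly nonlinear.

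The main obstacle is making the phrase ``$\partial_x\Gamma$ depends nontrivially on $x$'' precise enough to exclude pathological cancellations in $E_{\mu_0}[\,\cdot\,]$ --- a priori, averaging against a Gaussian could in principle kill the nonlinearity (e.g. if $\Gamma(t,x)$ were an odd nonlinear function and the quadratic form in $q$ collapsed). I expect one must either (i) assume $\Gamma$ is analytic in $x$ (or polynomial), so that $\overline{\Gamma^2}(t,q)$ is analytic in $q$ and its Taylor coefficients in $q$ are integrals of derivatives of $\Gamma$ that cannot all vanish unless $\partial_x\Gamma\equiv\text{const}$, or (ii) phrase the lemma's hypothesis directly as ``$\partial_q\overline{\Gamma^2}(t,\cdot)$ is not affine,'' which is the honest content. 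I would adopt interpretation (i), remark that the affine case $\Gamma(t,x)=A(t)x+D(t)$ of Proposition~\ref{prop:-f} is exactly the borderline where $\partial_x\Gamma$ is $x$-independent and the EL equation stays linear, and conclude that any strictly $x$-dependent $\partial_x\Gamma$ forces $\partial_q\overline{\Gamma^2}$ --- and hence the Euler--Lagrange equation $2\ddot q = 2\partial_t\bar\Gamma + \partial_q\overline{\Gamma^2}(t,q)$ --- to be nonlinear in $q$, which is the claim.
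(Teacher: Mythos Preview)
Your approach is essentially the same as the paper's: compute $L_q$ and $L_{\dot q}$, write out the Euler--Lagrange equation, and isolate the term $E_{\mu_0}[\Gamma(t,B+q)\,\Gamma_x(t,B+q)]=\tfrac{1}{2}\partial_q\overline{\Gamma^2}$ as the source of the nonlinearity in $q$. The paper is briefer and in fact looser than you---it simply asserts this term is nonlinear in $q$ and that the remaining three terms ``all involve a $\dot q$ or $\ddot q$,'' without addressing the $\partial_t\bar\Gamma$ piece you correctly flagged or the possibility of cancellation under the Gaussian expectation---so your cancellation of the $\dot q$ cross terms and your degree argument for polynomial $\Gamma$ are a genuine tightening of the same idea rather than a different route.
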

\begin{proof}
We write down the Lagrangian as
\begin{equation*}
    L(t,q,\dot q)=E_{\mu_0}\left[(\dot q(t)-\Gamma(t,B(t)+q(t)))^2\right].
\end{equation*}
Taking derivatives yields 
\begin{equation*}
    L_q(t,q,\dot q)=2E_{\mu_0}\left[(\dot q(t)-\Gamma(t,B(t)+q(t)))(-\Gamma_x(t,B(t)+q(t)))\right],
\end{equation*}
and 
\begin{equation*}
    L_{\dot q}(t,q,\dot q)=2E_{\mu_0}\left[(\dot q(t)-\Gamma(t,B(t)+q(t)))\right].
\end{equation*}
The Euler-Lagrange equation for $L$ is thus
\begin{align*}
    &-\dot q(t) E_{\mu_0}[\Gamma_x(t,B(t)+q(t))]+E_{\mu_0}[\Gamma(t,B(t)+q(t))\Gamma_x(t,B(t)+q(t))]\\
    &~~ -\ddot q(t)+\frac{d}{dt}E_{\mu_0}[\Gamma(t,B(t)+q(t))]=0.
\end{align*}
Because $\Gamma_x(t,x)$ depends nontrivially on $x$, the term $$E_{\mu_0}[\Gamma(t,B(t)+q(t))\Gamma_x(t,B(t)+q(t))]$$ depends nonlinearly on $q$ with no terms involving $\dot q$ or $\ddot q$. The three other terms will all involve a $\dot q$ or a $\ddot q$, so there is no chance for cancellation. 
\end{proof}
\begin{corollary}
If an ODE $$\ddot y(t)+f(t)\dot y(t)+g(t)y(t)+h(t)=0,$$
where $f,g,h\in C^1(0,T)$, comes from the Euler-Lagrange equation in Theorem \ref{theorem:Portmanteau}, then $\Gamma(t,x)=A (t) x+C(t),$ for some $A,C\in C^1(0,T)$. Thus $f=0$ in Proposition \ref{prop:-f} is of maximal generality. 
\end{corollary}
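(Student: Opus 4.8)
The plan is to derive this corollary directly from the preceding lemma, read in its contrapositive form, together with a one-line regularity bootstrap. Note first that the hypothesis ``the ODE comes from the Euler--Lagrange equation in Theorem~\ref{theorem:Portmanteau}'' carries with it the implicit assumption that $\Gamma(t,x)$ is regular enough in $x$ for $\Gamma_x$ to exist and for the Lagrangian $L$ of \eqref{eq:lagrangian-definition} and its derivatives $L_q,L_{\dot q}$ to be well defined --- this is exactly the setting in which the previous lemma operates, so I may invoke it.

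First I would record that the target equation $\ddot y(t)+f(t)\dot y(t)+g(t)y(t)+h(t)=0$ is affine in the unknown: the only occurrences of $y,\dot y,\ddot y$ are through the terms $\ddot y$, $f(t)\dot y$, $g(t)y$, together with the $y$-free inhomogeneity $h(t)$, none of which is nonlinear in $(y,\dot y,\ddot y)$. The previous lemma shows that if $\Gamma_x(t,x)$ depended nontrivially on $x$, the resulting Euler--Lagrange equation would contain the term $E_{\mu_0}[\Gamma(t,B(t)+q(t))\,\Gamma_x(t,B(t)+q(t))]$, which is a genuine nonlinearity in $q(t)$ and carries no $\dot q$ or $\ddot q$, so it cannot be cancelled by the other three terms (which all involve $\dot q$ or $\ddot q$). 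Hence, for the Euler--Lagrange equation to be of the affine form above, $\Gamma_x(t,x)$ must be independent of $x$; writing $\Gamma_x(t,x)=A(t)$ and integrating in $x$ with $C(t):=\Gamma(t,0)$ yields $\Gamma(t,x)=A(t)x+C(t)$, as claimed.

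It remains to pin down the regularity and state the consequence. Substituting the affine $\Gamma$ into the Euler--Lagrange computation already performed in the proof of Proposition~\ref{prop:-f} gives $-\ddot q(t)+\big(A^2(t)+\dot A(t)\big)q(t)+\big(\dot C(t)+A(t)C(t)\big)=0$; matching coefficients against the given ODE forces $g(t)=-(A^2(t)+\dot A(t))$, $h(t)=-(\dot C(t)+A(t)C(t))$, and --- since no $\dot q$ term appears --- $f(t)\equiv 0$. From $\dot A(t)=-g(t)-A^2(t)$ with $g$ and $A$ continuous we get $\dot A$ continuous, i.e.\ $A\in C^1$; then $\dot C(t)=-h(t)-A(t)C(t)$ with $h,A,C$ continuous gives $C\in C^1$. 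This proves the statement, and the concluding sentence is then the interpretation: every $\Gamma$ yielding a linear second order ODE via Theorem~\ref{theorem:Portmanteau} is affine, and every affine $\Gamma$ produces an equation with vanishing first-order coefficient, so the case $f=0$ in Proposition~\ref{prop:-f} already exhausts all ODEs reachable this way (the general-$f$ case of Theorem~\ref{thm:-main} being recovered only through the separate exponential reweighting $e^{-V}$). I do not expect a real obstacle here: the whole argument is the contrapositive of the previous lemma plus a short ODE bootstrap; the only point needing a word of care is making explicit that ``coming from the Euler--Lagrange equation'' supplies the differentiability of $\Gamma$ in $x$ required to apply that lemma.
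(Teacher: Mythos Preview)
Your proposal is correct and matches the paper's intent: the corollary is stated without proof as an immediate consequence of the preceding lemma (contrapositive plus the computation already done in Proposition~\ref{prop:-f}). Your added regularity bootstrap for $A,C\in C^1$ is a nice detail the paper leaves implicit.
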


We finally prove our main theorem for general $f$ by applying a simple transformation to solutions given in Proposition \ref{prop:-f}. This is the full version of Theorem \ref{thm:-main}.

\begin{theorem}[Feynman-Kac for ODEs]\label{thm:main-2}
Consider the differential equation
\begin{equation}\label{eq:main-eq-3}
    \ddot y(t)+f(t)\dot y(t)+g(t)y(t)+h(t)=0,
\end{equation}
where the unknown $y\in C^2([0,T],\mathbb R)$,
with boundary conditions $y(0)=0$ and $y(T)=a$. 
Let $f(t),g(t)$ and $h(t)$ be $C^1$ real valued functions. Let $V(t)=2^{-1}\int_0^t f(s) ds$. Define the functions
\[\hat g(t)=-\ddot V(t)+(\dot V(t))^2-f(t)\dot V(t)+g(t)\]
and 
\[\hat h(t)=e^{V(t)}h(t).\]
Suppose that the differential equations 
\[A^2(t)+\dot A(t)=-\hat g(t)\]
and 
$$\dot D(t)+D(t)A(t)=-\hat h(t)$$
have solutions, $A(t),D(t)$, implying that the solution $\hat y(t)=\operatorname{Mode}\left[  X \mid X(T)=a \right]$ given in Proposition \ref{prop:-f} to the equation
\[\hat y''(t)+\hat g(t)\hat y(t)+\hat h(t)=0\]
exists. Then the solution to \eqref{eq:main-eq-3} is
\begin{equation}
    y(t)=e^{-V(t)}\hat y(t)=e^{-V(t)}\operatorname{Mode}\left[ X \mid  X(T)=a \right].
\end{equation}
\end{theorem}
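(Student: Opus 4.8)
The plan is to reduce \eqref{eq:main-eq-3} to the damping-free case already handled by Proposition \ref{prop:-f} via an integrating-factor substitution, and then to read off the representation. Concretely, I would set $y(t)=e^{-V(t)}\hat y(t)$ with $V(t)=\tfrac12\int_0^t f(s)\,ds$, and show that $y\in C^2$ solves \eqref{eq:main-eq-3} if and only if $\hat y\in C^2$ solves $\ddot{\hat y}(t)+\hat g(t)\hat y(t)+\hat h(t)=0$ for the $\hat g,\hat h$ given in the statement.

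For the equivalence I would compute
\[
\dot y = e^{-V}\bigl(\dot{\hat y}-\dot V\hat y\bigr),\qquad \ddot y = e^{-V}\bigl(\ddot{\hat y}-2\dot V\dot{\hat y}+(\dot V^2-\ddot V)\hat y\bigr),
\]
substitute into \eqref{eq:main-eq-3}, and multiply through by $e^{V}$. Since $\dot V=f/2$, the coefficient of $\dot{\hat y}$ is $-2\dot V+f=0$, so the damping term vanishes; collecting the coefficient of $\hat y$ yields exactly $\hat g=-\ddot V+\dot V^2-f\dot V+g$, and the forcing term becomes $e^{V}h=\hat h$. Because $f\in C^1$, the factor $e^{-V}$ is regular enough that $y\leftrightarrow\hat y$ is a bijection of $C^2$ functions, so it suffices to construct $\hat y$. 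I would also propagate the boundary data: since $V(0)=0$, the condition $y(0)=0$ is equivalent to $\hat y(0)=0$, while $y(T)=a$ becomes $\hat y(T)=e^{V(T)}a$, so the conditioning event for the transformed problem should be read as $X(T)=e^{V(T)}a$; this terminal-value bookkeeping is the one point that must be handled carefully.

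With the equivalence in hand, I would invoke Proposition \ref{prop:-f} for $\ddot{\hat y}+\hat g\hat y+\hat h=0$: by hypothesis the Riccati equation $A^2+\dot A=-\hat g$ and the linear equation $\dot D+DA=-\hat h$ have solutions $A,D\in C^1$, which is precisely what Proposition \ref{prop:-f} (equivalently, Theorem \ref{theorem:Portmanteau}) requires, and it produces $\hat y(t)=\operatorname{Mode}\!\left[X(t)\mid X(T)=e^{V(T)}a\right]$ for the diffusion $X(t)=\int_0^t[A(s)B(s)+D(s)]\,ds+B(t)$ attached to $\mu^\ast$ with $\Gamma(t,x)=A(t)x+D(t)$. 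Undoing the substitution gives $y(t)=e^{-V(t)}\hat y(t)=e^{-V(t)}\operatorname{Mode}\!\left[X(t)\mid X(T)=e^{V(T)}a\right]$, which is the asserted formula.

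I do not expect a deep obstacle here: the core is the classical fact that an integrating factor removes the first-order term of a linear second-order ODE, combined with a direct appeal to Proposition \ref{prop:-f}. The delicate points are of a bookkeeping nature: (i) confirming that the stated differentiability of $f,g,h$ supplies the regularity of $\hat g,\hat h$ demanded by Proposition \ref{prop:-f} — note that $\hat g$ contains $\ddot V=\dot f/2$, so this is where the $C^1$ hypothesis on $f$ is spent (or, alternatively, observing that mere continuity of $\hat g,\hat h$ already makes the solvability assumptions there meaningful); and (ii) carrying the terminal value correctly through the map $\hat y\mapsto e^{-V}\hat y$, since it is that value which fixes the conditioning event in the mode.
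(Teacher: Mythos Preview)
Your approach is exactly the paper's: set $y=e^{-V}\hat y$, compute the two derivatives, substitute into \eqref{eq:main-eq-3} to see it reduces to $\ddot{\hat y}+\hat g\hat y+\hat h=0$, and then invoke Proposition~\ref{prop:-f}. Your terminal-value bookkeeping ($\hat y(T)=e^{V(T)}a$, so the conditioning event for $X$ should carry this value) is in fact more careful than the paper's own proof, which does not address this point.
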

\begin{proof}
Differentiating $y$ once yields that 
\[\dot y(t)=e^{-V(t)}[ \hat y'(t)-\dot V(t)\hat y(t)].\]
Differentiating again gives
\[\ddot y(t)=e^{-V(t)}[ \hat y''(t)-\ddot V(t)\hat y(t)-\dot V(t) \hat y'(t)-\dot V(t)\hat y'(t)+(\dot V(t))^2\hat y(t)].\]
Checking that $y$ satisfies equation \eqref{eq:main-eq-3} concludes that
\begin{align*}
\ddot y(t)+f(t)\dot y(t)+g(t) y(t)+h(t) &=e^{-V(t)}[\hat y''(t)+\hat g(t)\hat y(t)]+h(t)\\
&=e^{-V(t)}[-e^{V(t)}h(t)]+h(t)\\
&=0.
\end{align*}
\end{proof}

\subsection{Feynman-Kac for Systems of Equations}
In this subsection, we extend Theorem \ref{thm:main-2} to the case of systems of linear second order ODEs. The proofs are similar to the proofs in the previous section, but we include them for completeness. 
\begin{proposition}
Consider the system of differential equations
\begin{equation}\label{eq:main-eq4}
    \ddot y(t)+g(t)y(t)+h(t)=0,
\end{equation}
where the unknown $y\in C^2([0,T],\mathbb R^n)$, 
with boundary conditions $y(0)=0$ and $y(T)=a$. 
Let $g(t)$ be a $C^1([0,T], \mathbb R^{n\times n})$ matrix and let $h(t)$ be a $C^1([0,T],\mathbb R^n)$ vector so that the differential equations 
\begin{equation*}
    A^T(t)A(t)+\dot A(t)=-g(t)
\end{equation*}
and
$$\dot D(t)+A^T(t)D(t)=-h(t)$$
have solutions, $A(t),D(t)$. Let $X$ be the diffusion corresponding to $\mu$ given in Theorem \ref{theorem:Portmanteau} with $\Gamma(t,x)=A(t)x+D(t)$.
Then the solution to \eqref{eq:main-eq4} is 
\begin{equation}\label{eq:solution3}
    y(t)=\operatorname{Mode}\left[ X(t)\bigg|  X(T)=a\right],
\end{equation}
Here, the conditional mode of a diffusion is defined as the minimizer of the Onsager-Machlup function defined in Theorem \ref{theorem:Portmanteau} over all paths $z$ with $z(T)=a$, as in Remark \ref{rmk:conditional}.

\end{proposition}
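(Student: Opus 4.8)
The plan is to follow the route of the proof of Proposition~\ref{prop:-f}, now carrying the computation through in $\mathbb{R}^n$. First I would take $\Gamma(t,x)=A(t)x+D(t)$ (with $A$ the $n\times n$ matrix and $D$ the $\mathbb{R}^n$ vector from the hypotheses) and invoke Theorem~\ref{theorem:Portmanteau}: since $A$ and $D$ are $C^1$, the functional $C=\int_0^T\Gamma(t,B(t))\cdot dB(t)$ is square integrable and the associated $\Phi$ satisfies Hypothesis~\ref{hypothesis:OM}, just as in the scalar case. Because $\Gamma$ is affine in $x$, Lemma~\ref{lemma:muast-mutilde} --- which is already stated for $n\times n$ matrices $A$ --- gives $C=\Phi$ and hence $\mu^\ast=\tilde\mu$. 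Thus it suffices to show that the Euler--Lagrange equation of the Lagrangian $L$ of Theorem~\ref{theorem:Portmanteau} coincides with the system \eqref{eq:main-eq4}; then Theorem~\ref{theorem:Portmanteau} identifies the solution of \eqref{eq:main-eq4} with the minimizer of $OM_\Phi$ over $\{z\in\mathcal{H}_{\mu_0}:z(T)=a\}$, i.e.\ with the conditional mode of $\tilde\mu=\mu^\ast$, which is $\operatorname{Mode}[X(t)\mid X(T)=a]$.

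The core step is the variational computation. With
\begin{align*}
L(t,q,\dot q)&=E_{\mu_0}\left[\|\dot q(t)-A(t)(B(t)+q(t))-D(t)\|^2\right]\\
&=\|\dot q(t)-A(t)q(t)-D(t)\|^2+E_{\mu_0}\left[\|A(t)B(t)\|^2\right],
\end{align*}
where the second equality uses $E_{\mu_0}[B(t)]=0$ and the last term is independent of $(q,\dot q)$ and so irrelevant to the variational problem. Writing $w(t):=\dot q(t)-A(t)q(t)-D(t)$ we get $L_{\dot q}=2w$ and $L_q=-2A^T(t)w$, the transpose appearing because $L_q$ involves the Jacobian of $q\mapsto w$. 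Hence the Euler--Lagrange equation $\frac{d}{dt}L_{\dot q}-L_q=0$ reduces to $\dot w+A^Tw=0$, i.e.
\[
\ddot q(t)+\left(A^T(t)-A(t)\right)\dot q(t)-\left(\dot A(t)+A^T(t)A(t)\right)q(t)-\left(\dot D(t)+A^T(t)D(t)\right)=0.
\]
The hypotheses $A^TA+\dot A=-g$ and $\dot D+A^TD=-h$ turn the coefficient of $q$ into $g$ and the inhomogeneous term into $h$; the remaining issue is that the friction term $\left(A^T-A\right)\dot q$ must disappear, which happens exactly when the chosen solution $A$ of the matrix Riccati equation is symmetric. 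This is in turn guaranteed when $g$ is symmetric and the Riccati equation is solved with a symmetric terminal value, since then $\frac{d}{dt}(A-A^T)=g^T-g=0$ along the flow, so $A-A^T\equiv 0$. Granting this, the Euler--Lagrange equation of $L$ is precisely \eqref{eq:main-eq4}.

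The conclusion is then assembled exactly as in Proposition~\ref{prop:-f}: Theorem~\ref{theorem:Portmanteau} ensures that $\operatorname{val}(\mathbb{M})$ is attained and that its minimizer solves the Euler--Lagrange equation, hence \eqref{eq:main-eq4}; Lemma~\ref{lemma:muast-mutilde} then rephrases this minimizer as the conditional mode of $\mu^\ast$, the law of the diffusion $X$. I expect the main obstacle --- and the only genuinely new point relative to the scalar proof --- to be the control of the cross term $\left(A^T-A\right)\dot q$ in the Euler--Lagrange equation: one must argue (or, if necessary, additionally assume, e.g.\ by imposing symmetry of $g$) that the relevant solution of the matrix Riccati equation is symmetric. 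The remaining items --- verifying that the quadratic Lagrangian $L$ satisfies the coercivity bound required in Theorem~\ref{theorem:Portmanteau} so that the infima are attained, and that $\Phi$ meets Hypothesis~\ref{hypothesis:OM} --- are the $n$-dimensional transcriptions of the corresponding steps in Proposition~\ref{prop:-f}.
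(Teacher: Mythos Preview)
Your approach is the same as the paper's: set $\Gamma(t,x)=A(t)x+D(t)$, verify the hypotheses of Theorem~\ref{theorem:Portmanteau}, compute the Euler--Lagrange equation of the Lagrangian $L$, and then identify the minimizer with the conditional mode via Lemma~\ref{lemma:muast-mutilde}. The paper carries out this computation in row-vector form and arrives at the same intermediate expression you do.

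Your calculation is in fact more careful than the paper's. When the paper ``collects terms'' it silently drops the friction contribution $\dot q^T(A^T-A)$ (and also writes $\dot A$ where $\dot A^T$ is meant), so after transposing it records $-\ddot q+(A^TA+\dot A^T)q+A^TD+\dot D=0$, which is $\ddot q+g^Tq+h=0$ rather than \eqref{eq:main-eq4}. You correctly isolate the residual term $(A^T-A)\dot q$ and observe that it vanishes precisely when the chosen Riccati solution $A$ is symmetric, which in turn is guaranteed if $g$ is symmetric and the Riccati flow is started at a symmetric value. This is a genuine point the paper glosses over; your proposed fix (impose symmetry of $g$ and of the initial $A$) is the natural one, and with it your argument goes through exactly as written. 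Without some such assumption neither proof yields \eqref{eq:main-eq4} as stated.
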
\label{prop:system-f}
\begin{proof}
All the assumptions for Theorem \ref{theorem:Portmanteau} are satisfied so we may write the Lagrangian for $\Gamma$ as
\begin{equation}
    L(t,q,\dot q)=E_{\mu_0}\left[\|\dot q(t)-A(t)(B(t)+q(t))-D(t)\|^2\right].
\end{equation}
Differentiating under the integral sign yields that 
\begin{align*}
    L_{q}(t,q,\dot q)&=-E_{\mu_0}\left[2(\dot q(t)-A(t)(B(t)+q(t))-D(t))^T A(t)\right]\\
    &=-2(\dot q(t)-A(t)q(t)-D(t))^T A(t)\\
    &=-2(\dot q^T(t)A(t)-q^T(t)A^T(t)A(t)-D^T(t)A(t))
\end{align*}
and
\begin{align*}
    L_{\dot q}(t,q,\dot q)&=E_{\mu_0}\left[2(\dot q(t)-A(t)(B(t)+q(t))-D(t))^T \right]\\
    &=2(\dot q(t)-A(t)q(t)-D(t))^T\\
    &=2(\dot q^T(t)-q^T(t)A^T(t)-D^T(t)).
\end{align*}
The Euler-Lagrange equation is then
\begin{equation*}
    -\dot q^T(t)A(t)+q^T(t)A^T(t)A(t)+D^T(t)A(t)-\ddot q^T(t)+q^T\dot A^T(t)+\dot q^T(t)A^T(t)+\dot D^T(t)=0.
\end{equation*}
Collecting terms yields that
\begin{equation}\label{eq:pretranspose}
    -\ddot q^T(t)+q^T(t)(A^T(t)A(t)+\dot A(t))+D^T(t)A(t)+\dot D^T(t)=0.
\end{equation}
Taking the transpose of equation \eqref{eq:pretranspose} yields that
\begin{equation*}
    -\ddot q(t)+(A^T(t)A(t)+\dot A^T(t))\dot q(t)+A^T(t)D(t)+\dot D(t)=0,
\end{equation*}
which is equation \eqref{eq:main-eq4}.
\end{proof}
\begin{theorem}
[Feynman-Kac for system of ODEs]\label{thm:main-4}
Consider the system of differential equations
\begin{equation}\label{eq:main-eq-5}
    \ddot y(t)+f(t)\dot y(t)+g(t)y(t)+h(t)=0,
\end{equation}
where the unknown $y\in C^2([0,T],\mathbb R^n)$,
with boundary conditions $y(0)=0$ and $y(T)=a$. 
Let $f(t),g(t)$ be $C^1([0,T], \mathbb R^{n\times n})$ matrices and let $h(t)$ be a $C^1([0,T],\mathbb R^n)$ vector. Let $V(t)=2^{-1}\int_0^t f(s) ds$. Suppose that the matrix exponentials $e^{V(t)}$ and $e^{-V(t)}$ exist and $e^{-V(t)}$ commutes with both $f(t)$ and $g(t)$. Define the functions
\[\hat g(t)=-\ddot V(t)+(\dot V(t))^2-f(t)\dot V(t)+g(t)\]
and 
\[\hat h(t)=e^{V(t)}h(t),\]
where $e^{V(t)}$ denotes the standard exponential of a matrix. Suppose that the differential equations
\begin{equation}
    A^T(t)A(t)+\dot A(t)=-\hat g(t)
\end{equation}
and
$$\dot D(t)+A^T(t)D(t)=-\hat h(t)$$
have solutions, $A(t),D(t)$, implying that the solution $\hat y(t)=\operatorname{Mode}\left[  X \mid  X(T)=a \right]$ given in Proposition \ref{prop:system-f} to the equation
\[\hat y''(t)+\hat g(t)\hat y(t)+\hat h(t)=0\]
exists. Then the solution to \eqref{eq:main-eq-3} is
\begin{equation}
    y(t)=e^{-V(t)}\hat y(t)=e^{-V(t)}\operatorname{Mode}\left[  X \mid  X(T)=a \right].
\end{equation}
\end{theorem}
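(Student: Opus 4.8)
The plan is to prove Theorem~\ref{thm:main-4} by the same device used for the scalar Theorem~\ref{thm:main-2}: verify directly that the change of variables $y(t)=e^{-V(t)}\hat y(t)$ sends a solution $\hat y$ of the ``hatted'' system $\hat y''(t)+\hat g(t)\hat y(t)+\hat h(t)=0$ to a solution of the original system \eqref{eq:main-eq-5}. The existence of $\hat y$, together with its identification as $\operatorname{Mode}[X\mid X(T)=a]$, is exactly what Proposition~\ref{prop:system-f} supplies once the two matrix equations $A^{T}A+\dot A=-\hat g$ and $\dot D+A^{T}D=-\hat h$ are assumed solvable, so nothing new is needed on the probabilistic side. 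The only feature absent from the scalar argument is that $V$, $f$, $g$ are now matrix valued, so I must keep track of the order of multiplication and justify differentiating the matrix exponentials $e^{\pm V(t)}$; this is precisely the role of the hypotheses that $e^{V(t)}$ and $e^{-V(t)}$ exist and that $e^{-V(t)}$ commutes with $f(t)$ and $g(t)$.

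First I would record that $\dot V(t)=\tfrac12 f(t)$, and that under the stated commutativity the matrix exponential behaves as in the scalar case, i.e.\ $\tfrac{d}{dt}e^{-V(t)}=-\dot V(t)e^{-V(t)}=-e^{-V(t)}\dot V(t)$ and likewise for $e^{V(t)}$. With this in hand, differentiating $y=e^{-V}\hat y$ twice gives, exactly as in the proof of Theorem~\ref{thm:main-2} but with the products now in the correct order,
\[
\dot y=e^{-V}\bigl[\hat y'-\dot V\hat y\bigr],\qquad
\ddot y=e^{-V}\bigl[\hat y''-2\dot V\hat y'+(\dot V)^{2}\hat y-\ddot V\hat y\bigr].
\]

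Next I would substitute these into $\ddot y+f\dot y+gy+h$, use $f e^{-V}=e^{-V}f$ and $g e^{-V}=e^{-V}g$ to pull $e^{-V}$ out to the front, and collect terms. Since $f=2\dot V$ one has $f\dot V=2(\dot V)^{2}$, so the $\hat y'$ contributions cancel and the coefficient of $\hat y$ inside the bracket becomes $-\ddot V+(\dot V)^{2}-f\dot V+g=\hat g$; hence $\ddot y+f\dot y+gy+h=e^{-V}\bigl[\hat y''+\hat g\hat y\bigr]+h$. Because $\hat y$ solves $\hat y''+\hat g\hat y=-\hat h=-e^{V}h$ and $e^{-V(t)}e^{V(t)}=I$ (automatic, as $V$ commutes with itself), the right-hand side equals $-e^{-V}e^{V}h+h=0$, which is \eqref{eq:main-eq-5}. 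I would also note the boundary data: $V(0)=0$ gives $y(0)=\hat y(0)=0$, and $y(T)=e^{-V(T)}\hat y(T)$ with $\hat y(T)=a$, matching the setup of Proposition~\ref{prop:system-f}.

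I do not expect a deep obstacle, since the computation is a transcription of the scalar proof; the one point that genuinely requires care, and which I would isolate as a short lemma or inline remark, is the justification that $\tfrac{d}{dt}e^{-V(t)}=-\dot V(t)e^{-V(t)}$ and that $e^{-V(t)}$ may be commuted past $f(t)$ and $g(t)$. These are the only steps where matrix non-commutativity could invalidate the scalar manipulation, and they are exactly what the commutativity hypotheses of the theorem are designed to guarantee; everything else is routine differentiation and collection of terms.
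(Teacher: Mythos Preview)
Your proposal is correct and follows essentially the same approach as the paper: differentiate $y=e^{-V}\hat y$ twice, substitute into $\ddot y+f\dot y+gy+h$, pull $e^{-V}$ through using the commutativity hypotheses, and recognize the bracket as $\hat y''+\hat g\hat y=-\hat h$. If anything, you are more explicit than the paper about where the commutativity of $e^{-V}$ with $f$ and $g$ is actually invoked and about the formula $\tfrac{d}{dt}e^{-V(t)}=-\dot V(t)e^{-V(t)}$, which the paper simply uses without comment.
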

\begin{proof}
Differentiating $y$ once yields that 
\[\dot y(t)=e^{-V(t)}[ \hat y'(t)-\dot V(t)\hat y(t)].\]
Differentiating again gives
\[\ddot y(t)=e^{-V(t)}[ \hat y''(t)-\ddot V(t)\hat y(t)-\dot V(t) \hat y'(t)-\dot V(t)\hat y'(t)+(\dot V(t))^2\hat y(t)].\]
Checking that $y$ satisfies equation \eqref{eq:main-eq-3} using the commutivity assumption concludes that
\begin{align*}
\ddot y(t)+f(t)\dot y(t)+g(t) y(t)+h(t) &=e^{-V(t)}[\hat y''(t)+\hat g(t)\hat y'(t)]+h(t)\\
&=e^{-V(t)}[-e^{V(t)}h(t)]+h(t)\\
&=0.
\end{align*}
\end{proof}

\section{Examples}\label{sec:examples}
In this section, we give a few examples of the process $ X$ for various ODEs. 
\begin{example}
Consider the ODE
\[\ddot y(t)-y(t)=0.\]
As there is no $\dot y$ term, we may use Proposition \ref{prop:-f} directly. In this case, $A(t)$ solves the first order differential equation 
\[A^2(t)+\dot A(t)=1.\]
Thus we may let $A=\pm 1$. As there is no $h$ term, we may simply let $D=0$. Therefore choosing $\Gamma(t,x)=x$ and thus \[C=\int_0^T B(t) dB(t).\]
Thus the solution is the conditional mode of the process $ X(t)$ with density 
\[\frac{d\mu^\ast }{d\mu_0}=\frac{e^{-C}}{E_{\mu_0}[e^{-C}]}.\]
That is, the process 
\[X(t)=\int_0^t B(s) ds+B(t).\]
\end{example}
\begin{example}
Consider the ODE
\[\ddot y(t)+\dot y(t)-y(t)=0.\]
As $f\neq 0$ we must use the full Theorem \ref{thm:main-2}. We note that $\hat g(t)=-1$ and $\hat h(t)=-1$. So therefore $ X$ is the same as the previous example and we only need to include the exponential weight to get that
\[y(t)=e^{-t/2}\operatorname{Mode}\left[ X(t)\mid  X(T)=a\right]\]
\end{example}
\begin{example}
Consider the ODE
\[\ddot y(t)-y(t)+t=0.\]
Then we can let $A(t)=1$ and $D(t)=1-t$. Therefore $\Gamma(t,x)=x+1-t$. Therefore 
\[C=\int_0^T [B(t) +1-t ]dB(t).\]
Thus the solution is the conditional mode of the process $X(t)$ with density 
\[\frac{d\mu^\ast}{d\mu_0}=\frac{e^{-C}}{E_{\mu_0}[e^{-C}]}.\]
That is, the process is 
\[X(t)=\int_0^t [B(s) +1-s ]ds+B(t).\]
\end{example}

We conclude with an example of an ODE so that equation \eqref{eq:A-ODE} cannot be solved explicitly. 

\begin{example}\label{example:Airy}
Consider the Airy ODE
\[\ddot y(t)-ty(t)=0,\]
with boundary conditions $y(0)=0$ and $y(1)=2$. As $f=0$ we may use Proposition \ref{prop:-f}. $A$ must satisfy
\[A^2(t)+\dot A(t)=t.\]
However, this equation is not explicitly solvable by hand. Therefore we estimate $A$ using a regular perturbation expansion and use part (d) of our portmanteau Theorem \ref{theorem:Portmanteau} to estimate the solution. We numerically minimize the functional 
\[\frac{1}{2}\int_0^T E_{\mu_0}[(\dot q(t)-A(t)B(t))^2]dt\]
over all $q$ which are Legendre polynomials of degree less than $200$ with the estimated $A$, we call this minimizer $q_\ast$. We plot the estimated solutions against the true solution, along with the value of the functional 
\[L(q_\ast)=\int_0^T (\ddot q_\ast(t)-tq_\ast(t))^2 dt\]
for various degrees of approximation in Figure~1. 
\begin{figure}[H]
    \centering
    \begin{subfigure}[b]{0.31\textwidth}
        \includegraphics[width=\textwidth]{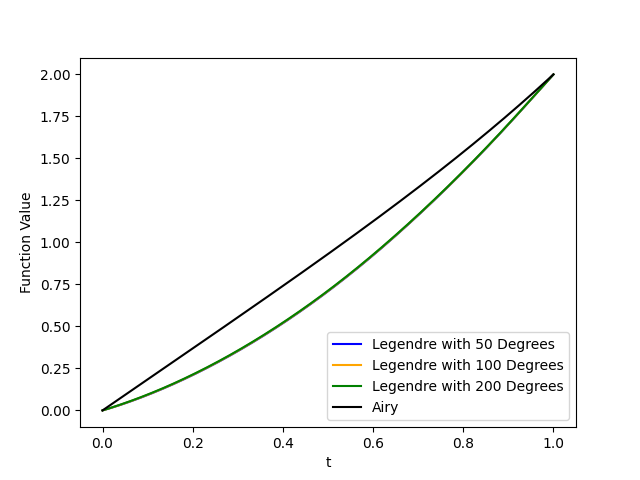}
        \caption{Approximate solution compared to the Airy function.}
    \end{subfigure}
    ~
    \begin{subfigure}[b]{0.32\textwidth} 
        \includegraphics[width=\textwidth]{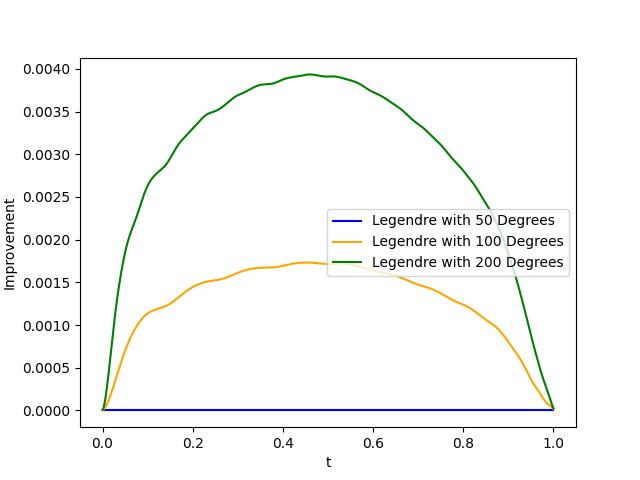}    
        \caption{Improvement in the approximation (compared with the 50 degree case).}
    \end{subfigure}
    ~
    \begin{subfigure}[b]{0.31\textwidth}
        \includegraphics[width=\textwidth]{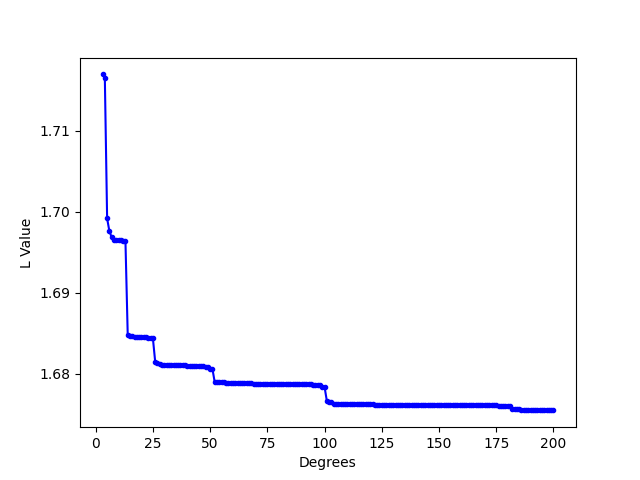}    
        \caption{Improvement in the value $L(q_\ast)$ with the degree of approximation.}
    \end{subfigure}
    \caption{}
\end{figure}
\end{example}

The approximation can be expected to improve further by employing higher degree Legendre polynomials. However, as Figure~1(B) and Figure~1(C) show, this improvement is very slow. Nonetheless, what we have demonstrated is a completely new approach to solving the Airy equation.

\section{Conclusions}
We presented a new Feynman-Kac type formula for systems of linear second order ODEs, by demonstrating that the solution of the ODE is the {\it mode} of a specific diffusion process that is determined by the coefficients of the ODE. To the best of our knowledge, this characterization of the solution of a given linear ODE in terms of a path functional of a stochastic process is novel and has not been identified in the literature before. This result opens the door to new ways of solving second order linear ODEs, potentially leveraging stochastic simulation, just as the Feynman-Kac theorem has led to the development of Monte Carlo methods for solving linear parabolic PDEs. We gave a few examples of the utility of our result for numerically solving ODEs. The primary bottleneck to numerically implementing this method is the calculation of $A$ which, as noted before, is the solution to a Riccati equation which is known to be difficult to solve explicitly. There is work, of course, on numerical solutions of Riccati equations, see e.g. \cite{Riccati-Numerical}. In our current example with the Airy equation, we compute an approximate solution to the Ricatti equation by using a regular perturbation expansion (which entails the introduction of some `bias' into final solution to the second order ODE).

However, once $A$ is identified, numerically computing the solution to the second order ODE can be done reasonably efficiently. Part (d) of our Portmanteau in Theorem \ref{theorem:Portmanteau} provides one possible way of computing the mode. Alternatively, the part (a) of the theorem suggests that the mode could be computed by solving the constrained information projection. One approach to doing this would be to use the Iterative Proportional Fitting Procedure (IPFP) or its discrete counterpart the Sinkhorn algorithm~\cite{di2020optimal}, for instance. There are also direct methods for computing the mode; see \cite{Dutra} for examples using a trapezoidal discretization. 

To conclude, we pose two open questions:

\begin{question}
Is there an efficient way of numerically estimating the solution without estimating $A$ first? 
\end{question}

\begin{question}
Can we extend our results to nonlinear ODE? That is, given a general equation \[\ddot y(t)+F(t,y,\dot y)=0,\]
is there a condition on $F$ so that we can guarantee to find a corresponding $\Gamma$?
\end{question}

\section*{Acknowledgements}

The authors would like to acknowledge Zihe Zhou for her code for the Airy equation, Example \ref{example:Airy}.
\bibliographystyle{plain}
\bibliography{Bibliography}
\end{document}